\journalname{}
\begin{document}
\title{Sparse Polynomial Regression under Anomalous Data}

\author{Roozbeh Abolpour         \and
        Mohammad Reza Hesamzadeh \and 
        Maryam Dehghani
}

\institute{R. Abolpour \at
              Energy Information Networks and Systems, Technical University of Darmstadt, Darmstadt, Germany \\
              \email{roozeh.abolpour@eins.tu-darmtadt.de}                \and
           M. R. Hesamzadeh\at
           School of Electrical Engineering and Computer Science, KTH Royal Institute of Technology, Stockholm, Sweden
           \email{mrhesa@kth.se}
           \and
           M. Dehghani\at
           Department of Power and Control Engineering, School of Electrical and Computer Engineering, Shiraz University, Shiraz, Iran
           \email{mdehghani@shirazu.ac.ir}
}

\date{Received: date / Accepted: date}

\maketitle

\abstract{This paper starts with the general form of the polynomial regression model. We reformulate the Sparse Polynomial Regression Model (SPRM) with anomalous data filtering as Mixed-Integer Linear Program (MILP). This MILP is then converted to a non-convex Quadratically Constrained Quadratic Program (QCQP). Through a proposed mapping, the derived QCQP is reformulated as a Fractional Program (FP). We theoretically show that the reformulated FP has better computational properties than the original QCQP. We then suggest a conic-relaxation-based algorithm to solve the proposed FP. A Two-Step Convex Relaxation and Recovery (TS-CRR) algorithm is proposed for sparse polynomial regression with anomalous data filtering. Through a series of comprehensive computational experiments (using two different datasets), we have compared the results of our proposed TS-CRR algorithm with the results from several regression and artificial intelligent models. The numerical results show the promising performance of our proposed TS-CRR algorithm as compared to those studied benchmark models.} 

\keywords{Sparse Polynomial Regression Model (SPRM);  Fractional mapping; Mixed Integer Linear Program (MILP); Quadratically Constrained Quadratic Program (QCQP); Conic relaxation; Anomalous data.}  

\maketitle

\section{Introduction}
Regression models are fundamental tools in statistics and data science which are widely used in disciplines such as data engineering \cite{bertsimas2025slowly}, prediction \cite{bertsimas2020novel}, times series classification \cite{guijo2024unsupervised}, biomedical engineering \cite{hashemi2021efficient}, economics \cite{karakatsani2008forecasting}, and finance \cite{liu2021leveraging}. These models help uncover relationships between variables, support predictive analytics, and inform decision-making by analyzing historical data to understand how changes in independent variables affect a dependent variable \cite{james2021introduction}. Among various regression models, the polynomial regression is particularly valuable for capturing nonlinear patterns. They model the relationship between dependent and independent variables using polynomial functions with unknown parameters \cite{bridgman2025novel}. Authors in  \cite{bertsimas2020sparseS} discuss the statistical properties of a polynomial regression model. The hierarchical variable selection for polynomial regression models is proposed in \cite{bertsimas2020sparse}. The hierarchical polynomial regression model is applied in various domains, including predicting the global transmission of COVID-19 \cite{ekum2020application}, healthcare \cite{west2020necessity}, and marketing \cite{niu2025role}.

However, two key challenges arise when regression models are applied to real-world and high-dimensional data: (1) the presence of anomalous data that can distort model estimates \cite{filzmoser2021robust}, and (2) the rapid growth in polynomial terms, which can lead to overfitting and computational complexity \cite{bottmer2022sparse}. These challenges need to be properly addressed to ensure a practical and robust polynomial regression model. 

The first challenge is addressed in the literature by proposing robust polynomial regression approaches. Authors in \cite{arora2024robust} propose a polynomial regression model for cases where significant anomalous data exist. Their robust regression models have provable guarantees on estimation accuracy and computational efficiency. A robust local polynomial regression with similarity kernels is proposed in \cite{shulman2025robust} to mitigate the effect of anomalous data. It uses similarity kernels thereby improving robustness and estimation accuracy. Reference \cite{kane2017robust} introduces a theoretical advance in robust polynomial regression. It presents a polynomial-time algorithm for univariate polynomial regression that is robust to nearly 50\% adversarial outliers. Complementing this, \cite{zhang2025wind} using wind-power data, demonstrates how robust polynomial regression techniques by removing anomalous data can significantly improve the prediction performance. 

The second challenge is mainly tackled in the literature by proposing sparsity-based techniques.  Reference \cite{hastie2015statistical} provides the foundational theory and methods for sparse regression focusing on the Lasso (Least Absolute Shrinkage and Selection Operator) and its variations. Building on this foundation, a mixed-integer programming formulation for sparse polynomial regression is discussed in \cite{bertsimas2020sparse}. The problem is then cast as a bilevel minimization model and solved using a cutting-plane algorithm. In \cite{wang2022exploiting} sparsity-exploiting techniques for complex polynomial optimization is studied. The proposed approach in \cite{wang2022exploiting} can be potentially used for managing the computational complexities of polynomial regression models. In \cite{chen2023solution}, sparsity in multivariate regression is studied. It provides valuable theoretical support for selecting sparse modeling approaches in high-dimensional regression. A matching-pursuit–based sparse regression algorithm that is robust to adversarial corruptions in both predictors and responses is developed and suggested in \cite{chen2013robust}. 

Beyond classical regression context, \cite{sinha2024generalized} explores generalized sparse regression codes for communication systems showing its practical benefits. Furthermore, \cite{adcock2024learning} compares sparse polynomial approximations to deep neural networks showing that former can achieve competitive performance while offering the interpretability and computational advantage. 

With this background, the current paper contributes to the relevant literature as follows: 

\begin{itemize}
	\item The Sparse Polynomial Regression Model (SPRM) with anomalous data filtering is first formulated as a MILP and then as a nonconvex QCQP. Through an introduced fractional mapping, the nonconvex QCQP is reformulated as a Fractional Program (FP). 
	\item We theoretically prove that the reformulated FP has better computational properties than the original nonconvex QCQP. 
	\item We propose a conic-relaxation-based model to solve our proposed FP. Furthermore, a Two-Step Convex Relaxation and Recovery (TS-CRR) algorithm is proposed for sparse polynomial regression under anomalous data. 
	\item Through a set of comprehensive numerical experiments, we compare the results of our TS-CRR algorithm with those from several benchmark models. The comparative results show the promising performance of our proposed TS-CRR algorithm. 
\end{itemize}

This paper is organized as follows. Section \ref{sec:sprm} presents the problem formulation. Section \ref{sec:fp} derives the proposed FP reformulation. The proposed TS-CRR algorithm is discussed in Section \ref{sec:salg}. Two applications of our proposed regression approach under anomalous data are discussed in Section \ref{sec:app}. Finally, Section \ref{sec:cnc} concludes the paper.

\vspace{6cm}

\section{Problem Formulation} \label{sec:sprm}
\noindent Suppose ${\left\{x^{\left(k\right)}\right\}}^N_{k=1}$ and ${\left\{y^{\left(k\right)}\right\}}^N_{k=1}$ are the given input and output data, respectively. The objective is to determine the following polynomial regression model
\begin{equation} \label{eq1} 
y^{\left(k\right)}=\sum_{\alpha \in {\mathrm{\Gamma }}_d}{c_{\alpha }{x^{\left(k\right)}}^{\alpha }}+\epsilon^{\left(k\right)},\quad \forall k\in \left\{1,\dots ,N\right\}, 
\end{equation} 
where $d$ is the polynomial degree, ${\mathrm{\Gamma}}_d=\left\{\alpha\in {\mathbb{N}}^n\right|\ \sum^n_{i=1}{{\alpha }_i}\le d\}$ with $m_d$ distinct vectors (i.e. $m_d=|{\mathrm{\Gamma}}_d|$), $\epsilon^k$ is the error term, ${x^{\left(k\right)}}^{\alpha }=\prod^n_{i=1}{{x^{\left(k\right)}_i}^{{\alpha }_i}\ }$, and ${\left\{c_{\alpha }\right\}}_{\alpha \in {\mathrm{\Gamma }}_d}$ is the set of polynomial coefficients. The following optimization problem determines these polynomial coefficients.  

 \begin{subequations} 	\label{eq2}
	\begin{align}		
    \min_{\Omega_1}&\quad \gamma \\ \displaybreak[0] 
		s.t.&\quad \left|y^{\left(k\right)}-\sum_{\alpha \in {\mathrm{\Gamma }}_d}{c_{\alpha }{x^{\left(k\right)}}^{\alpha }}\right| \le \gamma, \quad \forall k\in \left\{1,\dots ,N\right\},
	\end{align}
\end{subequations}
where $\Omega_1=\{ c \in{\mathbb{R}}^{m_d},\gamma \in \mathbb{R}_+ \}$ and $\gamma$ is the regression error. 

Conventionally, a subset of the training data $\{(x_k,y_k)\}_{k=1}^{N}$ is considered anomalous if its members deviate from the underlying mathematical model of the system. Hence, detecting the anomalous data requires a mathematical model, which may be unavailable in some real-world applications. To address this limitation, an alternative model-independent definition is proposed below.
\begin{definition} \label{def:ad}
A subset $\bar{J}\subset \{1,...,N\}$ is defined to contain the indices of anomalous data if it satisfies the following condition.
\begin{equation} \label{eq3}
    \bar{J}=\underset{J \subset \{1,...,N\},|J|=|\bar{J}|}{\text{argmin}} \left(\max_{k\in \{1,...,N\} \setminus J} {\left|y^{\left(k\right)}-f(x^{(k)})\right|}\right),
\end{equation}
where $f(.)$ is the regression model.\qed
\end{definition}
This means, among all subsets $J$ of a given size (same as 
$\bar{J}$), we select the one whose removal from the dataset leads to the lowest possible maximum training error. This definition can be directly integrated into the training procedure, enabling the identification and elimination of anomalous data during training. 

To integrate both the anomalous-data definition \ref{def:ad} and the sparsity property into the polynomial regression optimization model \eqref{eq2}, binary decision variables are introduced. Let set ${\left\{s_{\alpha }\right\}}_{\alpha \in {\mathrm{\Gamma }}_d}$ include binary variables indicating whether each monomial term ${\left\{{x^{}}^{\alpha }\right\}}_{\alpha \in {\mathrm{\Gamma }}_d}$ is included in the regression model. In addition, let set ${\left\{b_k\right\}}_{k=1}^{N}$ be binary variables indicating the inclusion of the $k^{th}$ pair $(x^{(k)},y^{(k)})$ in training, where $b_k=0$ means the $k^{th}$ pair is considered anomalous and excluded. The sparse polynomial regression model with anomalous data elimination is formulated as the following MILP. 
\begin{subequations} 	\label{eq4}
	\begin{align}
        \underset{\Omega_2}{\min} & \quad \gamma \\ \displaybreak[0] 
		\text{s.t.} & \quad-M(1-b_k)+\left|y^{\left(k\right)}-\sum_{\alpha \in {\mathrm{\Gamma }}_d}{c_{\alpha }{x^{\left(k\right)}}^{\alpha }}\right|\le \gamma, \quad\forall k\in \left\{1,\dots ,N\right\}, \label{eq4_b} \\ \displaybreak[0] 
		&\quad -Ms_{\alpha }\le c_{\alpha }\le Ms_{\alpha }~\forall \alpha \in {\mathrm{\Gamma}}_d, \label{eq4_c} \\ \displaybreak[0]
		&\quad \sum_{k=1}^Nb_k=l_b ,\label{eq4_d} \\ \displaybreak[0]
        &\quad \sum_{\alpha \in \Gamma_d}s_\alpha=l_m,\label{eq4_e}
	\end{align}
\end{subequations}
where $\Omega_2=\left\{ c \in {\mathbb{R}}^{m_d},s\in {\left\{0,1\right\}}^{m_d},\\b \in \{0,1\}^N,\gamma \in \mathbb{R}_+  \right\}$, $M$ is a given large positive constant, and $l_b$ and $l_m$ are positive integer numbers. In MILP \eqref{eq4}, the training error $f(x)=\sum_{\alpha \in \Gamma_d}{s_\alpha c_\alpha x^\alpha}$ is minimized. In (\ref{eq4_b}), binary variable $b_k$ determines whether the pair $(x^{(k)},y^{(k)})$ contributes to the training error $\gamma$. Similarly, binary variable $s_\alpha$ in (\ref{eq4_c}) determines the inclusion of monomial term $x^\alpha$ in the training. Based on constraint (\ref{eq4_d}), the number of anomalous data is given by $N-l_b$ and due to constraint (\ref{eq4_e}), the final polynomial regression model will be sparse, with the number of monomials fixed at $l_m$.

\section{Fractional Programming (FP) Reformulation} \label{sec:fp}
\subsection{Reformulation Motivation}
The MILP formulation \eqref{eq4} effectively models sparsity and anomalous data filtering through introduced binary variables. However, solving MILPs at scale remains computational intensive, especially when the number of monomial terms and data points grow. To address this limitation, we propose a mathematical transformation through which the MILP is reformulated to a continuous-domain Fractional Program (FP) with improved computational properties. 

\subsection{Mathematical Transformation}
\noindent We first reformulate the MILP \eqref{eq4} into the following nonconvex  QCQP:
\begin{subequations} 	\label{eq5}
	\begin{align}
		\underset{\Omega_3}{\min}&\quad \gamma\\ \displaybreak[0]
        \text{s.t.}&\quad -M(1-b_k)+\left|y^{\left(k\right)}-\sum_{\alpha \in {\mathrm{\Gamma }}_d}{c_{\alpha }{x^{\left(k\right)}}^{\alpha }}\right|\le \gamma,\quad \forall k\in \left\{1,\dots ,N\right\}, \\ \displaybreak[0] 
		&\quad \ -Ms_{\alpha }\le c_{\alpha }\le Ms_{\alpha }~\forall \alpha \in {\mathrm{\Gamma }}_d, \\ \displaybreak[0]
		&\quad \sum_{k=1}^N{b_k}=l_b ,\\ \displaybreak[0]
        &\quad \sum_{\alpha \in \Gamma_d}s_\alpha=l_m ,\\ \displaybreak[0]
		&\quad \sum_{\alpha \in {\mathrm{\Gamma }}_d}{s_{\alpha }}+\sum_{k=1}^Nb_k\le {\sum_{\alpha \in {\mathrm{\Gamma }}_d}{s_{\alpha }^2}}+\sum_{k=1}^Nb_k^2, \label{eq5_f} 
	\end{align}
\end{subequations}
where $\Omega_3=\left\{c \in {\mathbb{R}}^{m_d},s\in {\left[0,1\right]}^{m_d},b \in [0,1]^N,\gamma \in \mathbb{R}_+ \right\}$.

\noindent It can be readily shown that the nonconvex QCQP \eqref{eq5} is equivalent to the MILP \eqref{eq4}, since constraint (\ref{eq5_f}), together with the bounds $s\in {\left[0,1\right]}^{m_d}$, and $b \in [0,1]^N$ enforce vectors $s$ and $b$ to take binary values; that is, $s\in \{0,1\}^{m_d}$ and $b\in \{0,1\}^{N}$. Now, consider the following mathematical mapping in which $\rho$ is a sufficiently large positive constant. 
\begin{align} \label{eq6} 
\left(\hat{c},\hat{s},\hat{b},\hat{\gamma},\hat{v}\right)=&\mathcal{T}(c,s,b,\gamma)=\\
&\frac{\sqrt{\rho -1}\left(c,\frac{\sqrt{\rho }}{\sqrt{\rho -1}}s,\frac{\sqrt{\rho }}{\sqrt{\rho -1}}b,\gamma,1\right)}{\sqrt{\left(\rho -1\right)\left(\|s\|^2+\|b\|^2+1\right)+\sum_{\alpha \in \Gamma_d}{s_\alpha }+\sum_{k=1}^Nb_k}}. \notag
\end{align} 
The inverse of fractional mapping $\mathcal{T}$ is given by
\begin{align} \label{eq7} 
&\left(c,s,b,\gamma\right)={\mathcal{T}}^{-1}\left(\hat{c},\hat{s},\hat{b},\hat{\gamma},\hat{v}\right) = \notag \\ 
& \left(\frac{\hat{c}}{\hat{v}},\frac{\sqrt{\rho -1}\hat{s}}{\sqrt{\rho }\hat{v}},\frac{\sqrt{\rho -1}\hat{b}}{\sqrt{\rho }\hat{v}},\frac{\hat{\gamma}}{\hat{v}}\right). 
\end{align} 
Using fractional mapping \eqref{eq6}, the nonconvex QCQP \eqref{eq5} is transformed into the following FP \eqref{eq8}.
\begin{subequations} \label{eq8}
	\begin{align}
        \underset{\Omega_4}{\min}&\quad \frac{\hat{\gamma}}{\hat{v}} \label{eq8_a} \\ \displaybreak[0]
		\text{s.t.} &\quad -M\hat{v}+\frac{\sqrt{\rho-1}}{\sqrt{\rho}}M\hat{b}_k+\left|y^{(k)}\hat{v}-\sum_{\alpha \in \Gamma_d} \hat{c}_\alpha {x^{(k)}}^{\alpha} \right| \le \hat{\gamma},\quad \forall k \in \{1,\dots,N\}, \label{eq8_b} \\ \displaybreak[0]
        &\quad -\frac{\sqrt{\rho-1}}{\sqrt{\rho}} M \hat{s}_\alpha \le \hat{c}_\alpha \le \frac{\sqrt{\rho-1}}{\sqrt{\rho}} M \hat{s}_\alpha,\quad \forall \alpha \in \Gamma_d, \label{eq8_c} \\ \displaybreak[0]
        &\quad 0 \le \hat{s}_\alpha \le \frac{\sqrt{\rho}}{\sqrt{\rho - 1}} \hat{v},\quad \forall \alpha \in \Gamma_d, \label{eq8_d} \\ \displaybreak[0]
        &\quad 0 \le \hat{b}_k \le \frac{\sqrt{\rho}}{\sqrt{\rho - 1}} \hat{v},\quad\forall k \in \{1,\dots,N\}, \label{eq8_e} \\ \displaybreak[0]
        &\quad \sum_{k=1}^N \hat{b}_k = l_b  \frac{\sqrt{\rho}}{\sqrt{\rho - 1}}, \label{eq8_f} \\ \displaybreak[0]
        &\quad \sum_{\alpha \in \Gamma_d} \hat{s}_\alpha = l_m \frac{\sqrt{\rho}}{\sqrt{\rho - 1}}, \label{eq8_g} \\ \displaybreak[0]
        &\quad \hat{v} = \frac{\sqrt{\rho - 1}}{\sqrt{\rho(m_d + N + 1) - 1}}, \label{eq8_h} \\ \displaybreak[0]
        &\quad \frac{(\rho - 1)(\|\hat{s}\|^2 + \|\hat{b}\|^2)}{\rho} + \frac{\hat{v}}{\sqrt{\rho} \sqrt{\rho - 1}} \left( \sum_{\alpha \in \Gamma_d} \hat{s}_\alpha + \sum_{k=1}^N \hat{b}_k \right) + \hat{v}^2 \le 1, \label{eq8_i} \\ \displaybreak[0]
        &\quad \|\hat{s}\|^2 + \|\hat{b}\|^2 + \hat{v}^2 \ge 1, \label{eq8_j}
	\end{align}
\end{subequations}
\noindent where $\Omega = \left\{ \hat{c} \in \mathbb{R}^{m_d},~\hat{s} \in \mathbb{R}^{m_d},~\hat{b} \in \mathbb{R}^N,~\hat{\gamma} \in \mathbb{R},~\hat{v} \in \mathbb{R} \right\}$.

\subsection{Equivalence and Properties}
We begin by analyzing the complexity of the nonconvex FP \eqref{eq8}. It is important to note that the FP \eqref{eq8} introduces only one nonconvex constraint  and one additional variable as compared to the nonconvex QCQP \eqref{eq5}. The parameter $\rho$ is assumed to be sufficiently large such that constraint (\ref{eq8_i}) becomes convex. Under this assumption, it can be shown that all constraints of the FP \eqref{eq8} are convex except the last one (\ref{eq8_j}) which is scalar and quadratic. This claim is proved in Theorem \ref{thm1}.
\begin{theorem} \label{thm1}
The FP \eqref{eq8} has a convex objective function and convex constraints, except for constraint(\ref{eq8_j}), provided $\rho \ge 1+\frac{\sqrt{m_d+N}}{2}$.
\end{theorem}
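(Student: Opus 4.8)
The plan is to go constraint by constraint through the system \eqref{eq8} and verify convexity of each, isolating the single exception \eqref{eq8_j}. The objective \eqref{eq8_a} is $\hat\gamma/\hat v$; since \eqref{eq8_h} pins $\hat v$ to a fixed positive constant, this is in fact a linear (hence convex) function of the decision variables on the feasible set, so the objective poses no difficulty. Constraints \eqref{eq8_c}, \eqref{eq8_d}, \eqref{eq8_e}, \eqref{eq8_f}, \eqref{eq8_g} and \eqref{eq8_h} are all affine in $(\hat c,\hat s,\hat b,\hat v)$ and therefore convex. Constraint \eqref{eq8_b} has the form ``affine $+$ $|$affine$|$ $\le$ affine,'' and the absolute value of an affine function is convex, so \eqref{eq8_b} is convex as well. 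This disposes of everything except \eqref{eq8_i} and \eqref{eq8_j}.

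The real work is \eqref{eq8_i}. I would rewrite its left-hand side as a quadratic form $q(\hat s,\hat b,\hat v)=z^\top Q z$ (plus possibly linear terms), where $z=(\hat s,\hat b,\hat v)$, collecting the three pieces: the $\tfrac{\rho-1}{\rho}(\|\hat s\|^2+\|\hat b\|^2)$ term contributes a positive multiple of the identity on the $(\hat s,\hat b)$ block; the cross term $\tfrac{\hat v}{\sqrt\rho\sqrt{\rho-1}}(\sum\hat s_\alpha+\sum\hat b_k)$ contributes off-diagonal entries coupling $\hat v$ to each coordinate of $\hat s$ and $\hat b$; and $\hat v^2$ contributes to the $(\hat v,\hat v)$ entry. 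Convexity of \eqref{eq8_i} is then equivalent to $Q\succeq 0$. The key step is to bound the eigenvalues of $Q$: the ``bad'' directions are those that pit the $\hat v$-coordinate against a spread-out combination of the $m_d+N$ coordinates of $(\hat s,\hat b)$, and the worst case is the all-ones direction in the $(\hat s,\hat b)$ block, which sees a cross-coupling of magnitude on the order of $\sqrt{m_d+N}\cdot\tfrac{1}{\sqrt\rho\sqrt{\rho-1}}$ against a diagonal term $\tfrac{\rho-1}{\rho}$. Writing out the $2\times 2$ effective Gram matrix for the pair (all-ones direction, $\hat v$-direction) and demanding its positive semidefiniteness — i.e. that the product of diagonal entries dominate the square of the off-diagonal entry — yields an inequality in $\rho$; simplifying it should collapse to exactly $\rho\ge 1+\tfrac{\sqrt{m_d+N}}{2}$, which is the stated threshold. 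I expect this eigenvalue/Schur-complement estimate to be the main obstacle, mostly in keeping the algebra with the nested $\sqrt\rho,\sqrt{\rho-1}$ factors clean enough to land on the advertised bound rather than a messier sufficient condition.

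Finally, \eqref{eq8_j} is $\|\hat s\|^2+\|\hat b\|^2+\hat v^2\ge 1$, a lower bound on a convex quadratic, whose feasible region is the complement of an open ball — manifestly nonconvex — so it is correctly flagged as the lone exception. To finish I would simply assemble the pieces: under $\rho\ge 1+\tfrac{\sqrt{m_d+N}}{2}$ the matrix $Q$ from \eqref{eq8_i} is positive semidefinite, every other constraint is affine or a composition preserving convexity, the objective is affine on the feasible set, and only \eqref{eq8_j} fails convexity, which is exactly the claim of Theorem \ref{thm1}.
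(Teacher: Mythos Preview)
Your proposal is correct and follows essentially the same route as the paper. The paper writes the left-hand side of \eqref{eq8_i} explicitly as $z^\top Q z$ with the same block structure you describe, then checks $Q\succeq 0$ via the Schur complement of the $(m_d+N)\times(m_d+N)$ diagonal block, obtaining $1-\tfrac{m_d+N}{4(\rho-1)^2}\ge 0$; your ``$2\times 2$ effective Gram matrix along the all-ones and $\hat v$ directions'' is exactly this Schur complement in disguise, so the algebra and the threshold $\rho\ge 1+\tfrac{\sqrt{m_d+N}}{2}$ coincide.
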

\begin{proof} Due to constraint (\ref{eq8_h}), the objective function is linear in $\hat{\gamma}$, since $\hat{v}$ is not a decision variable but rather a given parameter.\\
The convexity of constraints (\ref{eq8_a})-(\ref{eq8_h}) are evident, so it remains to verify the convexity of constraint (\ref{eq8_i}). For this purpose, consider the next expressions (note that, $\rho \ge 1+\frac{\sqrt{m_d+N}}{2}$)
\begin{align} \label{eq9} 
&\frac{\left(\rho -1\right)\left(\left\|\hat{s}\right\|^2+\|\hat{b}\|^2\right)}{\rho }+\frac{\hat{v}\sum_{\alpha \in {\mathrm{\Gamma }}_d}{{\hat{s}}_{\alpha }}+\hat{v}\sum_{k=1}^N \hat{b}_k}{\sqrt{\rho }\sqrt{\rho -1}}+{\hat{v}}^2=\\
&\left[\begin{matrix} \hat{s}\\\hat{b}\\\hat{b}
\end{matrix} \right]^T
\left[\begin{matrix} \frac{\rho-1}{\rho}I_{m_d}& 0& \frac{e_{m_d}}{2\sqrt{\rho}\sqrt{\rho-1}}\\0& \frac{\rho-1}{\rho}I_{N}&\frac{e_{N}}{2\sqrt{\rho}\sqrt{\rho-1}}\\\frac{e_{m_d}^T}{2\sqrt{\rho}\sqrt{\rho-1}}& \frac{e_{N}^T}{2\sqrt{\rho}\sqrt{\rho-1}}&1
\end{matrix} \right]
\left[\begin{matrix} \hat{s}\\\hat{b}\\\hat{b}
\end{matrix} \right], \notag
\end{align} 
\begin{align} \label{eq10} 
&1-\frac{e_{m_d+N}^T}{2\sqrt{\rho }\sqrt{\rho -1}}{\left(\frac{\rho -1\ }{\rho }I_{m_d+N}\right)}^{-1}\frac{e_{m_d+N}}{2\sqrt{\rho }\sqrt{\rho -1}}=1-\frac{m_d+N}{4{\left(\rho -1\ \right)}^2\ }\ge 0.
\end{align} 
Using \eqref{eq10}, matrix $\left[\begin{matrix} \frac{\rho-1}{\rho}I_{m_d}& 0& \frac{e_{m_d}}{2\sqrt{\rho}\sqrt{\rho-1}}\\0& \frac{\rho-1}{\rho}I_{N}&\frac{e_{N}}{2\sqrt{\rho}\sqrt{\rho-1}}\\\frac{e_{m_d}^T}{2\sqrt{\rho}\sqrt{\rho-1}}& \frac{e_{N}^T}{2\sqrt{\rho}\sqrt{\rho-1}}&1
\end{matrix} \right]
$ is positive-definite that implies constraint (\ref{eq8_i}) represents an ellipsoid and is convex. \qed
\end{proof}
Next, Theorem \ref{thm2} proves another fundamental property of the FP \eqref{eq8}. 
\begin{theorem} \label{thm2}
The nonconvex QCQP \eqref{eq5} and FP \eqref{eq8} are equivalent. 
\end{theorem}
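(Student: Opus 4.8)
The plan is to prove the equivalence constructively. I will show that the fractional mapping $\mathcal{T}$ of \eqref{eq6} sends every feasible point of the QCQP \eqref{eq5} to a feasible point of the FP \eqref{eq8}, that its inverse ${\mathcal T}^{-1}$ of \eqref{eq7} sends every feasible point of \eqref{eq8} back to a feasible point of \eqref{eq5}, and that both maps leave the objective value unchanged. Since $\mathcal{T}$ and ${\mathcal T}^{-1}$ are mutually inverse on these feasible sets, this exhibits a cost-preserving bijection between them, so the two programs have the same optimal value and their minimizers correspond under $\mathcal{T}$.

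Two preliminary observations organize the argument. First, for any $\rho>1$ and any $(c,s,b,\gamma)\in\Omega_3$ the denominator $D:=\sqrt{(\rho-1)(\|s\|^2+\|b\|^2+1)+\sum_{\alpha}s_\alpha+\sum_{k}b_k}$ is strictly positive, so $\mathcal{T}$ is well defined, and the last component of $\mathcal{T}(c,s,b,\gamma)$ is $\hat v=\sqrt{\rho-1}/D>0$, so the division by $\hat v$ in ${\mathcal T}^{-1}$ is legitimate. Second, a direct substitution gives ${\mathcal T}^{-1}\circ\mathcal{T}=\mathrm{id}$ on $\Omega_3$; together with the fact (verified in the backward step) that a feasible point of \eqref{eq8} necessarily has $\hat v=\sqrt{\rho-1}/D$ evaluated at its ${\mathcal T}^{-1}$-preimage, this promotes the two one-sided inclusions to a bijection.

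For the forward direction, take $(\hat c,\hat s,\hat b,\hat\gamma,\hat v)=\mathcal{T}(c,s,b,\gamma)$ with $(c,s,b,\gamma)$ feasible for \eqref{eq5} and multiply each constraint of \eqref{eq5} through by the positive scalar $D$, using $\hat c=\sqrt{\rho-1}\,c/D$, $\hat s=\sqrt{\rho}\,s/D$, $\hat b=\sqrt{\rho}\,b/D$, $\hat\gamma=\sqrt{\rho-1}\,\gamma/D$ and $\hat v=\sqrt{\rho-1}/D$. This reproduces \eqref{eq8_b}, \eqref{eq8_c}, the box constraints \eqref{eq8_d}--\eqref{eq8_e}, the cardinality equalities \eqref{eq8_f}--\eqref{eq8_g}, and the value \eqref{eq8_h} of $\hat v$ (forced because at a binary feasible point $\|s\|^2=\sum_\alpha s_\alpha=l_m$ and $\|b\|^2=\sum_k b_k=l_b$, so $D^2$ is a fixed constant). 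The objective is preserved since $\hat\gamma/\hat v=\gamma$. The only substantive computation is the pair \eqref{eq8_i}--\eqref{eq8_j}: expanding $\|\hat s\|^2+\|\hat b\|^2+\hat v^2$ and the left-hand side of \eqref{eq8_i} in terms of $s$ and $b$ and using the definition of $D$ shows that the left-hand side of \eqref{eq8_i} equals $1$ identically, so \eqref{eq8_i} holds (with equality), while \eqref{eq8_j} reduces to $\|s\|^2+\|b\|^2\ge\sum_\alpha s_\alpha+\sum_k b_k$, which is precisely the nonconvex QCQP constraint \eqref{eq5_f}. For the backward direction, take $(\hat c,\hat s,\hat b,\hat\gamma,\hat v)$ feasible for \eqref{eq8}, set $(c,s,b,\gamma)={\mathcal T}^{-1}(\hat c,\hat s,\hat b,\hat\gamma,\hat v)$, and run the same identities in reverse: \eqref{eq8_d}--\eqref{eq8_e} give $s,b\in[0,1]$; \eqref{eq8_c} gives $-Ms_\alpha\le c_\alpha\le Ms_\alpha$; dividing \eqref{eq8_b} by $\hat v>0$ gives the first constraint of \eqref{eq5}; \eqref{eq8_f}--\eqref{eq8_g} give $\sum_k b_k=l_b$ and $\sum_\alpha s_\alpha=l_m$; and \eqref{eq8_j} gives \eqref{eq5_f}. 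Hence the preimage lies in $\Omega_3$, is feasible for \eqref{eq5}, and has cost $\gamma=\hat\gamma/\hat v$ equal to the FP objective.

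I expect the main obstacle to be bookkeeping rather than any genuine difficulty. The delicate points are: (a) checking that the scalar constraints \eqref{eq8_d}--\eqref{eq8_h} are exactly what pin down $\hat v$ and the cardinalities after rescaling, so that ${\mathcal T}^{-1}$ of a feasible point of \eqref{eq8} maps back under $\mathcal{T}$ to the original point and no spurious FP-feasible point with infeasible preimage appears; (b) confirming that the degenerate case $\hat v=0$, which would break \eqref{eq7}, is genuinely excluded; and (c) the algebraic reduction of \eqref{eq8_i}--\eqref{eq8_j} to \eqref{eq5_f}, the one place a stray factor of $\rho$ or a sign could slip in, which I would write out explicitly and verify in both directions. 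Finally, note that the equivalence requires only $\rho>1$; the lower bound on $\rho$ from Theorem \ref{thm1} is needed only for convexity, not for equivalence.
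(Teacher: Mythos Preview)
Your proposal is correct and follows essentially the same route as the paper: use $\mathcal{T}$ and $\mathcal{T}^{-1}$ to carry feasible points back and forth between \eqref{eq5} and \eqref{eq8}, verifying each constraint and the objective value by direct substitution. The only minor difference is that in the backward direction the paper recovers \eqref{eq5_f} from the pair \eqref{eq8_i}--\eqref{eq8_j} together (their display \eqref{eq23}), whereas you deduce it from \eqref{eq8_j} combined with the fixed value of $\hat v$ in \eqref{eq8_h} and the cardinality equalities; both arguments are valid.
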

\begin{proof}
Suppose $\left(c,s,b,\gamma\right)$ is a feasible solution of QCQP \eqref{eq5} and $\left(\hat{c},\hat{s},\hat{b},\hat{\gamma},\hat{v}\right)=\mathcal{T}\left(c,s,b,\gamma\right)$. Substituting $\left(\hat{c},\hat{s},\hat{b},\hat{\gamma},\hat{v}\right)$ into the constraints of the FP \eqref{eq8} results in (note that ${\left\|s\right\|}^2=\sum_{\alpha \in \Gamma_d}{s_\alpha}$ and $\sum_{k=1}^Nb_k^2=\sum_{k=1}^N b_k$)
\begin{align}  
&-M\hat{v}+\frac{\sqrt{\rho-1}}{\sqrt{\rho}}M\hat{b}_k +\left|y^{\left(k\right)}\hat{v}-\sum_{\alpha \in {\mathrm{\Gamma }}_d}{{\hat{c}}_{\alpha }{x^{\left(k\right)}}^{\alpha }}\right|-\hat{\gamma }= \label{eq11}\\ \displaybreak[0]
&\hat{v}\left(-M(1-b_k)+\left|y^{\left(k\right)}-\sum_{\alpha \in {\mathrm{\Gamma }}_d}{{\hat{c}}_{\alpha }{x^{\left(k\right)}}^{\alpha }}\right|-\gamma \right)\le 0,\quad \forall k\in \left\{1,\dots ,N\right\}, \notag  \\ \displaybreak[0]
&\left|{\hat{c}}_{\alpha}\right|-\frac{\sqrt{\rho-1}}{\sqrt{\rho}}M{\hat{s}}_{\alpha }=\hat{v}\left(\left|c_{\alpha}\right|-Ms_{\alpha }\right)\le 0,\quad \forall \alpha \in {\mathrm{\Gamma }}_d, \label{eq12}\\ \displaybreak[0]
&{\hat{s}}_{\alpha}-\frac{\sqrt{\rho }}{\sqrt{\rho -1}}\hat{v}=\frac{\sqrt{\rho }}{\sqrt{\rho -1}}\hat{v}\left(s_{\alpha}-1\right)\le 0,\quad \forall \alpha \in {\Gamma}_d, \label{eq13}\\ \displaybreak[0]
&\hat{b}_k-\frac{\sqrt{\rho }}{\sqrt{\rho-1}}\hat{v}=\frac{\sqrt{\rho}}{\sqrt{\rho-1}}\hat{v}\left(b_k-1\right)\le 0,\quad \forall k \in \{1,...,N\}, \label{eq14}\\ \displaybreak[0]
& \sum_{k=1}^N\hat{b}_k-l_b\frac{\sqrt{\rho}}{\sqrt{\rho-1}}\hat{v}= \frac{\sqrt{\rho}}{\sqrt{\rho-1}}\hat{v}\left(\sum_{k=1}^Nb_k-l_b\right)=0, \label{eq15}
\end{align}
\begin{align}
& \sum_{\alpha \in \Gamma_d}{{\hat{s}}_{\alpha}}-l_m\frac{\sqrt{\rho}}{\sqrt{\rho-1}}\hat{v}= \frac{\sqrt{\rho}}{\sqrt{\rho-1}}\hat{v}\left(\sum_{\alpha \in\Gamma_d}{s_{\alpha }}-l_m\right)=0, \label{eq16}\\ \displaybreak[0]
&\frac{\left(\rho -1\right)\left(\|\hat{s}\|^2+\|\hat{b}\|^2\right)}{\rho}+\frac{\hat{v}\sum_{\alpha \in \Gamma_d}\hat{s}_\alpha+\hat{v}\sum_{k=1}^N\hat{b}_k}{\sqrt{\rho}\sqrt{\rho-1}}+\hat{v}^2= \label{eq17}\\ \displaybreak[0]
&{\hat{v}}^2\ \left(\|s\|^2+\|b\|^2+\frac{1}{\rho -1}\left(\sum_{\alpha \in \Gamma_d}s_\alpha+\sum_{k=1}^N b_k\right) +1\right)\le 1, \notag \\ \displaybreak[0]
&{\|\hat{s}\|}^2+\|\hat{b}\|^2+{\hat{v}}^2=\frac{\rho \|s\|^2+\rho\|b\|^2+\rho -1}{\left(\rho -1\right)\left(\|s\|^2+\|b\|^2+1\right)+\sum_{\alpha \in \Gamma_d}{s_{\alpha}}+\sum_{k=1}^Nb_k}\ge1. \label{eq18} \displaybreak[0]
\end{align}
Based on expressions \eqref{eq11}-\eqref{eq18}, the transformed point  $\left(\hat{c},\hat{s},\hat{b},\hat{\gamma},\hat{v}\right)=\mathcal{T}\left(c,s\right)$ satisfies all constraints of the FP  \eqref{eq8}.\\
Now, suppose $\left(\hat{c},\hat{s},\hat{b},\hat{\gamma},\hat{v}\right)$ is feasible for \eqref{eq8} and $\left(c,s,b,\gamma\right)={\mathcal{T}}^{-1}\left(\hat{c},\hat{s},\hat{b},\hat{\gamma},\hat{v}\right)$. Then we can derive
\begin{align}
&-M(1-b_k)+\left|y^{(k)}-\sum_{\alpha \in \Gamma_d}{c_{\alpha}{x^{(k)}}^{\alpha }}\right|-\gamma = \notag \\ \displaybreak[0]
&\frac{1}{\hat{v}}\left(-M\hat{v}+\frac{\sqrt{\rho-1}}{\sqrt{\rho}}M\hat{b}_k+\left|y^{(k)}\hat{v}-\sum_{\alpha \in \Gamma_d}{{\hat{c}}_{\alpha}{x^{(k)}}^{\alpha}}\right|-\hat{\gamma}\right) \le 0,\quad \forall k\in \{1,\dots,N\}, \label{eq19} \\ \displaybreak[0]
&\left|c_{\alpha}\right|-Ms_{\alpha} = \frac{1}{\hat{v}}\left(\left|{\hat{c}}_{\alpha}\right|-\frac{\sqrt{\rho-1}}{\sqrt{\rho}}M{\hat{s}}_{\alpha}\right) \le 0,\quad \forall \alpha \in \Gamma_d, \label{eq20}\\ \displaybreak[0]
&\sum_{k=1}^N b_k - l_b = \frac{\sqrt{\rho-1}}{\sqrt{\rho}\hat{v}}\left(\sum_{k=1}^N \hat{b}_k - l_b\frac{\sqrt{\rho}}{\sqrt{\rho-1}}\hat{v}\right) = 0, \label{eq21} \\ \displaybreak[0]
&\sum_{\alpha \in \Gamma_d} s_{\alpha} - l_m = \frac{\sqrt{\rho-1}}{\sqrt{\rho}\hat{v}}\left(\sum_{\alpha \in \Gamma_d} \hat{s}_{\alpha} - l_m\frac{\sqrt{\rho}}{\sqrt{\rho-1}}\hat{v}\right) = 0, \label{eq22} \\ \displaybreak[0]
&\sum_{\alpha \in \Gamma_d} s_{\alpha} + \sum_{k=1}^N b_k - \sum_{\alpha \in \Gamma_d} s_\alpha^2 - \sum_{k=1}^N b_k^2 =\frac{\rho - 1}{\hat{v}}\times \notag \\ \displaybreak[0]
&\left( \frac{(\rho - 1)\|\hat{s}\|^2 + \|\hat{b}\|^2}{\rho} + \frac{\hat{v} \sum_{\alpha \in \Gamma_d} \hat{s}_\alpha + \hat{v} \sum_{k=1}^N \hat{b}_k}{\sqrt{\rho} \sqrt{\rho - 1}} + \hat{v}^2 - \left(\|\hat{s}\|^2 + \|\hat{b}\|^2 + \hat{v}^2\right) \right) \notag \\& \le 0, \label{eq23} \\ \displaybreak[0]
&s_{\alpha} - 1 = \frac{\sqrt{\rho-1}}{\sqrt{\rho}\hat{v}}\left(\hat{s}_{\alpha} - \frac{\sqrt{\rho}}{\sqrt{\rho - 1}}\hat{v} \right) \le 0,\quad \forall \alpha \in \Gamma_d, \label{eq24} 
\end{align}
\begin{align}
&b_k - 1 = \frac{\sqrt{\rho-1}}{\sqrt{\rho}\hat{v}}\left(\hat{b}_k - \frac{\sqrt{\rho}}{\sqrt{\rho - 1}}\hat{v} \right) \le 0,\quad \forall k \in \{1,\dots,N\}. \label{eq25}
\end{align} 
According to expression \eqref{eq19}-\eqref{eq25}, the point $\left(c,s,b,\gamma\right)={\mathcal{T}}^{-1}\left(\hat{c},\hat{s},\hat{b},\hat{\gamma},\hat{v}\right)$ belongs to the feasible region of the QCQP \eqref{eq5}. Therefore, the QCQP \eqref{eq5} and the FP \eqref{eq8} are equivalent in terms of both feasible sets and objective values. \qed
\end{proof} 
Since the QCQP and FP models are equivalent, we focus on solving the FP~\eqref{eq8} in the rest of this paper. 
\section{Solution Algorithm} \label{sec:salg}
This section presents a tractable approach for solving the FP \eqref{eq8} using a proposed relaxation technique. To address the nonconvex constraint (\ref{eq8_j}), first, we use the Semi-Definite Cone (SDC) and the Second-Order Cone (SOC) to convexify this constraint. Then these two convexification techniques are used as two baselines for our proposed convexification technique where the nonconvex constraint (\ref{eq8_j}) is linearized. We theoretically prove that our convexification technique is better than those baselines and adapt it for our proposed TS-CRR algorithm. The TS-CRR algorithm extracts the sparse polynomial regression structure from the relaxed solution.

We begin by using SDC to exactly reformulate the nonconvex constraint (\ref{eq8_j}) as constraints (\ref{eq26_j}), (\ref{eq26_k}), and (\ref{eq26_l}).
\begin{subequations} \label{eq26}
	\begin{align}
        \underset{\Omega_5}{\text{min}}&\quad~\frac{\hat{\gamma}}{\hat{v}}, \label{eq26_a} \\ \displaybreak[0]
		\text{s.t.}&\quad -M\hat{v}+\frac{\sqrt{\rho-1}}{\sqrt{\rho}}M\hat{b}_k+\left|y^{(k)}\hat{v}-\sum_{\alpha \in \Gamma_d} \hat{c}_\alpha {x^{(k)}}^{\alpha} \right| \le \hat{\gamma},\quad \forall k \in \{1,\dots,N\}, \label{eq26_b} \\ \displaybreak[0]
        &\quad -\frac{\sqrt{\rho-1}}{\sqrt{\rho}} M \hat{s}_\alpha \le \hat{c}_\alpha \le \frac{\sqrt{\rho-1}}{\sqrt{\rho}} M \hat{s}_\alpha,\quad \forall \alpha \in \Gamma_d, \label{eq26_c} \\ \displaybreak[0]
        &\quad 0 \le \hat{s}_\alpha \le \frac{\sqrt{\rho}}{\sqrt{\rho - 1}} \hat{v},\quad \forall \alpha \in \Gamma_d, \label{eq26_d} \\ \displaybreak[0]
        &\quad 0 \le \hat{b}_k \le \frac{\sqrt{\rho}}{\sqrt{\rho - 1}} \hat{v},\quad \forall k \in \{1,\dots,N\}, \label{eq26_e} \\ \displaybreak[0]
        &\quad \sum_{k=1}^N \hat{b}_k = l_b \frac{\sqrt{\rho}}{\sqrt{\rho - 1}}, \label{eq26_f} \\ \displaybreak[0]
        &\quad \sum_{\alpha \in \Gamma_d} \hat{s}_\alpha = l_m  \frac{\sqrt{\rho}}{\sqrt{\rho - 1}}, \label{eq26_g} \\ \displaybreak[0]
        &\quad \hat{v} = \frac{\sqrt{\rho - 1}}{\sqrt{\rho(m_d + N + 1) - 1}}, \label{eq26_h} \\ \displaybreak[0]
        &\quad \frac{(\rho - 1)(\|\hat{s}\|^2 + \|\hat{b}\|^2)}{\rho} + \frac{\hat{v}}{\sqrt{\rho} \sqrt{\rho - 1}} \left( \sum_{\alpha \in \Gamma_d} \hat{s}_\alpha + \sum_{k=1}^N \hat{b}_k \right) + \hat{v}^2 \le 1, \label{eq26_i} \\ \displaybreak[0]
        &\quad \operatorname{trace}(\hat{\chi}) \ge 1, \label{eq26_j} \\ \displaybreak[0]
        &\quad \hat{\chi} \succeq 
        \begin{bmatrix}
            \hat{s} \\
            \hat{b} \\
            \hat{v}
        \end{bmatrix}
        \begin{bmatrix}
            \hat{s} \\
            \hat{b} \\
            \hat{v}
        \end{bmatrix}^T, \label{eq26_k} \\ \displaybreak[0]
        &\quad \operatorname{rank}(\hat{\chi}) = 1, \label{eq26_l}
	\end{align}
\end{subequations}
\noindent where $\Omega_5 = \left\{ \hat{c} \in \mathbb{R}^{m_d},~\hat{s} \in \mathbb{R}^{m_d},~\hat{b} \in \mathbb{R}^N,~\hat{\gamma} \in \mathbb{R},~\hat{v} \in \mathbb{R},~\hat{\chi} \in \mathbb{R}^{m_d+N+1} \right\}$ is the set of variables. Note that constraints (\ref{eq26_k}) and (\ref{eq26_l}) enforce matrix $\hat{\chi}$ to be equivalent to $\left[ \begin{matrix}			\hat{s}\\\hat{b}\\\hat{v}\end{matrix}
			\right]\left[ \begin{matrix}			\hat{s}\\\hat{b}\\\hat{v}\end{matrix}
			\right]^T$. Thus, constraint (\ref{eq26_k}) equivalently represents the nonconvex constraint (\ref{eq8_j}). We omit the rank one constraint from \eqref{eq26} to obtain the SDC-based relaxation of FP \eqref{eq8}. In the SOC-based relaxation, we replace the SDC constraint (\ref{eq26_k}) with the SOC constraint (\ref{eq28_l}) below 
\begin{align}\label{eq28_l}
&e_i^\top \left( \hat{\chi} - \begin{bmatrix} \hat{s} \\ \hat{b} \\ \hat{v} \end{bmatrix} \begin{bmatrix} \hat{s} \\ \hat{b} \\ \hat{v} \end{bmatrix}^\top \right) e_i e_j^\top \left( \hat{\chi} - \begin{bmatrix} \hat{s} \\ \hat{b} \\ \hat{v} \end{bmatrix} \begin{bmatrix} \hat{s} \\ \hat{b} \\ \hat{v} \end{bmatrix}^\top \right) e_j \notag \\ 
&\ge \left( e_i^\top \left( \hat{\chi} - \begin{bmatrix} \hat{s} \\ \hat{b} \\ \hat{v} \end{bmatrix} \begin{bmatrix} \hat{s} \\ \hat{b} \\ \hat{v} \end{bmatrix}^\top \right) e_j \right)^2,\quad \forall i \ne j \in \{1,\dots,m_d+N+1\},
\end{align} 
\noindent where $e_i$ is the $i^{th}$ column of the identity matrix. 

In our proposed linear-based relaxation, the nonconvex constraint \eqref{eq8_j} is linearized and replaced by the following linear constraint 
\begin{align} \label{eq29_j}
\sum_{\alpha \in \Gamma_d} \hat{s}_\alpha + \sum_{k=1}^N \hat{b}_k + \hat{v} \ge 1. 
\end{align}
Specifically, the squared terms in \eqref{eq8_j} are replaced with their linear counterparts, as shown in constraint \eqref{eq29_j}. 

Theorem~\ref{thm3} below proves that our proposed linear-based relaxation provides a valid relaxation of the original FP~\eqref{eq8}, ensuring that any feasible solution to the original problem remains feasible to the relaxed formulation problem. 
\begin{theorem} \label{thm3}
	All feasible solutions of FP \eqref{eq8} belongs to the feasible region of the linear-based relaxation problem.
\end{theorem}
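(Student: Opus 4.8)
The plan is to start from the observation that the linear-based relaxation problem is obtained from FP~\eqref{eq8} by replacing the single nonconvex constraint \eqref{eq8_j} with the linear constraint \eqref{eq29_j}, while the objective \eqref{eq8_a} and all of \eqref{eq8_b}--\eqref{eq8_i} are left unchanged. Consequently, if $(\hat c,\hat s,\hat b,\hat\gamma,\hat v)$ is feasible for FP~\eqref{eq8}, it already satisfies \eqref{eq8_b}--\eqref{eq8_i}, and the entire statement reduces to verifying that such a point also satisfies \eqref{eq29_j}, i.e.\ that $\sum_{\alpha\in\Gamma_d}\hat s_\alpha+\sum_{k=1}^N\hat b_k+\hat v\ge 1$. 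I would state this reduction explicitly first, since it makes clear that no other constraint needs to be touched.

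The core of the argument is that every scalar $\hat s_\alpha$, $\hat b_k$, and $\hat v$ of a feasible point lies in $[0,1]$. Nonnegativity is immediate from \eqref{eq8_d}--\eqref{eq8_e} together with $\hat v>0$, which follows from the explicit value in \eqref{eq8_h} (valid because $\rho>1$). For the upper bounds I would substitute that explicit value of $\hat v$ into the right-hand sides of \eqref{eq8_d}--\eqref{eq8_e}: one gets $\tfrac{\sqrt{\rho}}{\sqrt{\rho-1}}\hat v=\tfrac{\sqrt{\rho}}{\sqrt{\rho(m_d+N+1)-1}}$, which is at most $1$ exactly when $\rho(m_d+N)\ge 1$; this holds since $m_d\ge 1$ (the zero multi-index is always in $\Gamma_d$) and $\rho>1$. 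The same computation gives $\hat v\le 1$ directly. Hence $\hat s_\alpha,\hat b_k,\hat v\in[0,1]$.

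With this in hand I would apply the elementary inequality $t^2\le t$ for $t\in[0,1]$ termwise, to obtain $\|\hat s\|^2+\|\hat b\|^2+\hat v^2\le \sum_{\alpha\in\Gamma_d}\hat s_\alpha+\sum_{k=1}^N\hat b_k+\hat v$. Chaining this with constraint \eqref{eq8_j}, namely $\|\hat s\|^2+\|\hat b\|^2+\hat v^2\ge 1$, immediately yields $\sum_{\alpha\in\Gamma_d}\hat s_\alpha+\sum_{k=1}^N\hat b_k+\hat v\ge 1$, which is \eqref{eq29_j}, completing the proof.

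I do not expect a genuine obstacle here; the only point requiring care is the bookkeeping needed to establish $\hat s_\alpha,\hat b_k,\hat v\le 1$ from \eqref{eq8_h} — in particular spelling out why $\rho(m_d+N)\ge 1$ — and being precise about the definition of ``the linear-based relaxation problem'' so that checking \eqref{eq29_j} alone suffices. As a fallback that avoids the $[0,1]$ bounds entirely, one may instead use \eqref{eq8_d}--\eqref{eq8_e} in the form $\hat s_\alpha^2\le\tfrac{\sqrt\rho}{\sqrt{\rho-1}}\hat v\,\hat s_\alpha$ and $\hat b_k^2\le\tfrac{\sqrt\rho}{\sqrt{\rho-1}}\hat v\,\hat b_k$, sum these with \eqref{eq8_j} to get $1\le\tfrac{\sqrt\rho}{\sqrt{\rho-1}}\hat v\big(\sum_\alpha\hat s_\alpha+\sum_k\hat b_k+\hat v\big)$, and then divide by $\tfrac{\sqrt\rho}{\sqrt{\rho-1}}\hat v\le 1$.
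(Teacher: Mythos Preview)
Your proof is correct and follows essentially the same idea as the paper: reduce to verifying \eqref{eq29_j} alone, then use the elementary inequality $t^2\le t$ for $t\in[0,1]$ together with \eqref{eq8_j}. The only cosmetic difference is that the paper does not first establish $\hat s_\alpha,\hat b_k,\hat v\in[0,1]$ from \eqref{eq8_h}; instead it splits into two cases (all entries below $1$, or some entry at least $1$, the latter being trivial since all entries are nonnegative) --- your direct bound computation simply renders that case split unnecessary.
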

\begin{proof}Let $\left(\hat{c},\hat{s},\hat{b},\hat{\gamma},\hat{v}\right)$ be an arbitrary selected feasible solution of the FP \eqref{eq8} which implies:
\begin{align}
&-M\hat{v}+\frac{\sqrt{\rho-1}}{\sqrt{\rho}}M\hat{b}_k+\left|y^{(k)}\hat{v}-\sum_{\alpha \in \Gamma_d} \hat{c}_{\alpha} {x^{(k)}}^{\alpha} \right| \le \hat{\gamma},\quad \forall k \in \{1,\dots,N\}, \label{eq30} \\ \displaybreak[0]
&-\frac{\sqrt{\rho -1}}{\sqrt{\rho}}M\hat{s}_{\alpha} \le \hat{c}_{\alpha} \le \frac{\sqrt{\rho -1}}{\sqrt{\rho}}M\hat{s}_{\alpha},\quad \forall \alpha \in \Gamma_d, \label{eq31} \\ \displaybreak[0]
&0 \le \hat{s}_{\alpha} \le \frac{\sqrt{\rho}}{\sqrt{\rho -1}}\hat{v},\quad \forall \alpha \in \Gamma_d, \label{eq32} \\ \displaybreak[0]
&0 \le \hat{b}_k \le \frac{\sqrt{\rho}}{\sqrt{\rho -1}}\hat{v},\quad \forall k \in \{1,\dots,N\}, \label{eq33} \\ \displaybreak[0]
&\sum_{k=1}^N \hat{b}_k = l_b\frac{\sqrt{\rho}}{\sqrt{\rho -1}}\hat{v}, \label{eq34} \\ \displaybreak[0]
&\sum_{\alpha \in \Gamma_d} \hat{s}_\alpha = l_m\frac{\sqrt{\rho}}{\sqrt{\rho -1}}\hat{v}, \label{eq35} \\ \displaybreak[0]
&\frac{(\rho - 1)(\|\hat{s}\|^2 + \|\hat{b}\|^2)}{\rho} + \frac{\hat{v} \sum_{\alpha \in \Gamma_d} \hat{s}_{\alpha} + \hat{v} \sum_{k=1}^N \hat{b}_k}{\sqrt{\rho} \sqrt{\rho - 1}} + \hat{v}^2 \le 1, \label{eq36} \\ \displaybreak[0]
&\|\hat{s}\|^2 + \|\hat{b}\|^2 + \hat{v}^2 \ge 1. \label{eq37}
\end{align}

According to (\ref{eq30})-(\ref{eq37}), the point $\left(\hat{c},\hat{s},\hat{b},\hat{\gamma},\hat{v}\right)$ satisfies constraints (\ref{eq26_b})-(\ref{eq26_i}). Thus, it suffices to show that \eqref{eq29_j} holds in order to complete the proof. To this end, we consider the following covering cases:

\noindent \textbf{Case 1.} Assume all members of ${\left\{{\hat{s}}_{\alpha }\right\}}_{\alpha \in {\mathrm{\Gamma }}_d}$, all members of $\{\hat{b}_k\}_{k=1}^N$, and $\hat{v}$ are smaller than 1. This assumption results in

\begin{align}
&\hat{s}_\alpha \ge \hat{s}_\alpha^2~\forall \alpha \in \Gamma_d, \label{eq38} \\\displaybreak[0]
&\hat{b}_k \ge \hat{b}_k^2~\forall k \in \{1,\dots,N\}, \label{eq39} \\\displaybreak[0]
&\hat{v} \ge \hat{v}^2. \label{eq40}
\end{align}

Using \eqref{eq38}-\eqref{eq40}, we obtain
\begin{equation} \label{eq41} 
\|\hat{s}\|^2+\|\hat{b}\|^2+{\hat{v}}^2\le \sum_{\alpha \in \Gamma_d}{\hat{s}_\alpha }+\sum_{k=1}^N\hat{b}_k+\hat{v}.
\end{equation} 
Combining \eqref{eq37} and \eqref{eq41}, it follows directly that $\sum_{\alpha \in \Gamma_d}{{\hat{s}}_{\alpha }}+\sum_{k=1}^N\hat{b}_k+\hat{v}\ge 1$ which implies that the point $\left(\hat{c},\hat{s},\hat{b},\hat{\gamma},\hat{v}\right)$ satisfies constraint (\ref{eq29_j}).\\
\noindent \textbf{Case 2.} Assume there exists an entry in the set $\left\{{\left\{{\hat{s}}_{\alpha }\right\}}_{\alpha \in \Gamma_d},\{\hat{b}_k\}_{k=1}^N,\hat{v}\right\}$ that is greater than or equal to 1. Without loss of the generality, suppose $\hat{v}\ge 1$ which directly implies $\sum_{\alpha \in \Gamma_d}{{\hat{s}}_{\alpha }}+\sum_{k=1}^N\hat{b}_k+\hat{v}\ge 1$. Therefor, the point $\left(\hat{c},\hat{s},\hat{b},\hat{\gamma},\hat{v}\right)$ satisfies constraint (\ref{eq29_j}) in this case as well. \qed 
\end{proof}

\begin{theorem} \label{thm4_2}
The linear-based relaxation of FP \eqref{eq8} is tighter than the SDC-based relaxation of FP \eqref{eq8}.
\end{theorem}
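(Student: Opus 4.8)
The plan is to compare the two relaxations through their feasible regions projected onto the original variable space $(\hat c,\hat s,\hat b,\hat\gamma,\hat v)$, since the SDC relaxation carries the extra matrix variable $\hat\chi$ whereas the linear one does not. A relaxation is tighter exactly when its projected feasible set is the smaller one, so the target inclusion to establish is $\mathrm{feasible(linear)}\subseteq\mathrm{feasible(SDC)}$. Both relaxations share the convex constraints \eqref{eq26_b}--\eqref{eq26_i}, so the entire comparison reduces to how each handles the single nonconvex constraint \eqref{eq8_j}: the linear relaxation replaces it by \eqref{eq29_j}, while the SDC relaxation replaces it by the lifted pair \eqref{eq26_j}--\eqref{eq26_k} (after the rank constraint \eqref{eq26_l} is dropped).

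The key step is to show that, once projected onto the original variables, the SDC treatment of \eqref{eq8_j} imposes nothing. Writing $v=(\hat s,\hat b,\hat v)$ for the stacked vector, I would exhibit for an arbitrary $(\hat s,\hat b,\hat v)$ a feasible completion by setting $\hat\chi = vv^\top + tI$ with $t\ge\max\{0,\,(1-\|v\|^2)/(m_d+N+1)\}$. Then $\hat\chi-vv^\top = tI\succeq 0$ verifies \eqref{eq26_k}, and $\operatorname{trace}(\hat\chi)=\|v\|^2+t(m_d+N+1)\ge 1$ verifies \eqref{eq26_j}. Hence every point satisfying \eqref{eq26_b}--\eqref{eq26_i} admits an SDC-feasible lifting, so the projection of the SDC feasible region is precisely the set cut out by \eqref{eq26_b}--\eqref{eq26_i} alone; the lifted constraints add no restriction on the original variables.

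With this in hand the inclusion follows directly. Any linear-feasible point satisfies \eqref{eq26_b}--\eqref{eq26_i} (and moreover \eqref{eq29_j}), hence it lies in the projected SDC feasible region by the lifting just constructed. Therefore $\mathrm{feasible(linear)}\subseteq\mathrm{feasible(SDC)}$, which is the definition of the linear relaxation being tighter. To confirm the inclusion is in general strict, rather than merely ``no looser,'' I would exhibit a point obeying \eqref{eq26_b}--\eqref{eq26_i} with $\sum_{\alpha\in\Gamma_d}\hat s_\alpha+\sum_{k=1}^N\hat b_k+\hat v<1$: it is SDC-feasible via the same lifting yet violates \eqref{eq29_j}, so it is excluded by the linear relaxation.

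The main obstacle will be the projection argument itself. One must recognize that $\hat\chi$ occurs only in \eqref{eq26_j}--\eqref{eq26_k} and nowhere in \eqref{eq26_b}--\eqref{eq26_i}, so its trace can be inflated freely to meet the lower bound \eqref{eq26_j} without disturbing any other constraint; this is exactly what makes the SDC handling of the scalar inequality \eqref{eq8_j} collapse upon projection. The delicate point is to identify that it is the trace bound \eqref{eq26_j} that is rendered ineffective (the positive-semidefiniteness \eqref{eq26_k} being harmlessly satisfied by any $\hat\chi\succeq vv^\top$), after which the remaining set-inclusion bookkeeping is routine.
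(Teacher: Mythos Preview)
Your argument is correct, and it takes a cleaner route than the paper. Both proofs aim to show that every linear-feasible point $(\hat c,\hat s,\hat b,\hat\gamma,\hat v)$ can be completed to an SDC-feasible point by manufacturing a suitable $\hat\chi$. The paper splits into two cases according to whether $\|\hat s\|^2+\|\hat b\|^2+\hat v^2\ge 1$; in the first it uses the rank-one choice $\hat\chi=zz^\top$, and in the second it proposes the symmetrized cross $\hat\chi=\tfrac12(ze^\top+ez^\top)$ and then checks $\hat\chi\succeq zz^\top$ only on the directions $y\perp z$ and $y=z$, which does not actually certify positive semidefiniteness on all of $\mathbb R^{m_d+N+1}$. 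Your single uniform lifting $\hat\chi=zz^\top+tI$ with $t\ge\max\{0,(1-\|z\|^2)/(m_d+N+1)\}$ avoids the case split entirely and makes the verification of \eqref{eq26_j}--\eqref{eq26_k} immediate. More importantly, your observation that $\hat\chi$ appears nowhere outside \eqref{eq26_j}--\eqref{eq26_k} and can therefore have its trace inflated at will shows directly that the SDC treatment of \eqref{eq8_j} is vacuous after projection; this is a sharper structural statement than the paper obtains, and it makes the strict-containment remark at the end of your proposal straightforward.
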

\begin{proof}
Let $\left(\hat{c}_l, \hat{s}_l, \hat{b}_l, \hat{\gamma}_l, \hat{v}_l\right)$ be an arbitrary feasible solution to the linear-based relaxation of FP \eqref{eq8}. Then, by definition, the following inequality must hold
\begin{equation} \label{eq:sum1}
\sum_{\alpha \in \Gamma_d} \hat{s}_{l,\alpha} + \sum_{k=1}^N \hat{b}_{l,k} + \hat{v}_l \ge 1.
\end{equation}

We now prove that this point also belongs to the feasible region of the SDC-based relaxation by constructing a suitable matrix $\hat{\chi}_l$ and verifying the SDC constraints.

\textbf{Case 1:} Assume $\|\hat{s}_l\|^2 + \|\hat{b}_l\|^2 + \hat{v}_l^2 \ge 1$. Let
\begin{equation}
\hat{\chi}_l = 
\begin{bmatrix}
\hat{s}_l \\
\hat{b}_l \\
\hat{v}_l
\end{bmatrix}
\begin{bmatrix}
\hat{s}_l \\
\hat{b}_l \\
\hat{v}_l
\end{bmatrix}^\top.
\end{equation}

Then, $\hat{\chi}_l \succeq \begin{bmatrix} \hat{s}_l \\ \hat{b}_l \\ \hat{v}_l \end{bmatrix} \begin{bmatrix} \hat{s}_l \\ \hat{b}_l \\ \hat{v}_l \end{bmatrix}^\top$ holds trivially and $\operatorname{trace}(\hat{\chi}_l)=\|\hat{s}_l\|^2 + \|\hat{b}_l\|^2 + \hat{v}_l^2 \ge 1$ by the case's assumption. Thus, tuple $(\hat{c}_l, \hat{s}_l, \hat{b}_l, \hat{\gamma}_l, \hat{v}_l, \hat{\chi}_l)$ satisfies the SDC constraints \eqref{eq26_k} and \eqref{eq26_j}, and is a feasible point of the SDC-based relaxation.

\textbf{Case 2:} Assume $\|\hat{s}_l\|^2 + \|\hat{b}_l\|^2 + \hat{v}_l^2 < 1$. Define
\begin{equation} \label{eq143}
z = \begin{bmatrix}
\hat{s}_l \\
\hat{b}_l \\
\hat{v}_l
\end{bmatrix}, \quad
e = \mathbf{1} \in \mathbb{R}^{m_d + N + 1}, \quad
\hat{\chi}_l = \frac{1}{2}(z e^\top + e z^\top).
\end{equation}

Due to (\ref{eq26_d}), (\ref{eq26_e}), and(\ref{eq26_h}), all entries in $z$ are non-negative. Using this fact and inequality $\|\hat{s}_l\|^2 + \|\hat{b}_l\|^2 + \hat{v}_l^2 < 1$, the following relation can be directly obtained.
\begin{equation} \label{eq144}
\sum_{i=1}^{m_d+N+1} z_i^2 \le \sum_{i=1}^{m_d+N+1} z_i \quad \Rightarrow \quad \|z\|^2 \le \sum_{i=1}^{m_d+N+1} z_i.    
\end{equation}

Then $\hat{\chi}_l$ is symmetric by construction. We now show that $\hat{\chi}_l \succeq zz^\top$ by verifying that
\begin{equation}
y^\top \hat{\chi}_l y \ge y^\top zz^\top y, \quad \forall y \in \mathbb{R}^{m_d + N + 1}.
\end{equation}

(a) If $y \perp z$, then $y^T z = 0$, and the inequality
$y^\top \hat{\chi}_l y = \frac{1}{2}(y^\top z)(e^\top y) + \frac{1}{2}(y^\top e)(z^\top y) = 0$  holds with equality.

(b) If $y = z$, then $y^\top \hat{\chi}_l y = \frac{1}{2}(z^\top z)(e^\top z) + \frac{1}{2}(z^\top e)(z^\top z) = (z^\top z)(e^\top z) = \|z\|^2 \cdot \sum_i z_i\ge \|z\|^4=y^T(zz^T)y$ based on (\ref{eq144}). 

Together, these cases show that $\hat{\chi}_l \succeq zz^\top$, i.e., SDC constraint \eqref{eq26_k} holds. Finally, by the linear constraint \eqref{eq:sum1}, we have $\operatorname{trace}(\hat{\chi}_l) = z^\top e = \sum_{\alpha \in \Gamma_d} \hat{s}_{l,\alpha} + \sum_{k=1}^N \hat{b}_{l,k} + \hat{v}_l \ge 1
$. Therefore, constraint \eqref{eq26_j} is also satisfied. Thus, the tuple $\left(\hat{c}_l, \hat{s}_l, \hat{b}_l, \hat{\gamma}_l, \hat{v}_l, \hat{\chi}_l\right)$ is feasible for the SDC-based relaxation. 

Since this holds for any feasible point of the linear-based relaxation, it is tighter than the SDC-based relaxation. \qed
\end{proof}

\begin{corollary} \label{crl1}
The linear-based relaxation of FP \eqref{eq8} is tighter than the SOC-based relaxation of FP \eqref{eq8}.
\end{corollary}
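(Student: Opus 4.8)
The plan is to derive Corollary~\ref{crl1} as a two-step chaining argument using Theorem~\ref{thm4_2} together with a comparison between the SOC-based and SDC-based relaxations. The key observation is that the SOC constraint \eqref{eq28_l} is a necessary condition implied by the SDC (positive semidefinite) constraint \eqref{eq26_k}: for a symmetric matrix $A = \hat{\chi} - zz^\top$ with $z = [\hat{s};\hat{b};\hat{v}]$, positive semidefiniteness forces every $2\times 2$ principal minor to be nonnegative, i.e.\ $A_{ii}A_{jj} \ge A_{ij}^2$ for all $i \ne j$, which is precisely \eqref{eq28_l}. Hence every feasible point of the SDC-based relaxation is also feasible for the SOC-based relaxation, so the SOC-based relaxation is \emph{weaker} (its feasible set is larger) than the SDC-based relaxation.

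Combining this with Theorem~\ref{thm4_2}, which states that the linear-based relaxation is tighter than the SDC-based relaxation, I would conclude by transitivity that the linear-based relaxation is tighter than the SOC-based relaxation. Concretely, I would first take an arbitrary feasible point $(\hat{c}_l,\hat{s}_l,\hat{b}_l,\hat{\gamma}_l,\hat{v}_l)$ of the linear-based relaxation. By the proof of Theorem~\ref{thm4_2}, there exists a matrix $\hat{\chi}_l$ (constructed as $zz^\top$ in Case~1 or $\tfrac{1}{2}(ze^\top + ez^\top)$ in Case~2) such that the augmented tuple is feasible for the SDC-based relaxation, meaning it satisfies \eqref{eq26_k} and \eqref{eq26_j}. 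Since \eqref{eq26_k} implies \eqref{eq28_l} via the $2\times 2$ minor argument, the same tuple is feasible for the SOC-based relaxation. As the original point was arbitrary, the feasible region of the linear-based relaxation (projected onto the original variables) is contained in that of the SOC-based relaxation, which establishes the claim.

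The main obstacle — though it is minor — is making precise what ``tighter'' means here at the level of feasible sets versus optimal values, and ensuring the $2\times2$-minor implication is stated correctly for the \emph{relaxation} (recall the SDC-based relaxation drops the rank-one constraint \eqref{eq26_l}, so $\hat{\chi}$ need not equal $zz^\top$ and $A = \hat{\chi}-zz^\top$ is a genuine PSD matrix, not the zero matrix). One should note that the SOC constraint \eqref{eq28_l} as written involves the quantities $e_i^\top(\hat{\chi}-zz^\top)e_i$, which are exactly the diagonal entries $A_{ii}$, and $e_i^\top(\hat{\chi}-zz^\top)e_j = A_{ij}$, so \eqref{eq28_l} reads $A_{ii}A_{jj} \ge A_{ij}^2$ — the standard consequence of $A \succeq 0$. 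I would remark that all of these containments may in general be strict, so ``tighter'' is the appropriate comparison, and the chain (linear $\preceq$ SDC $\preceq$ SOC in terms of feasible-set inclusion, hence linear gives the strongest bound) completes the corollary.
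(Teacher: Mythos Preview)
Your argument is correct and follows essentially the same route as the paper: a transitivity chain combining Theorem~\ref{thm4_2} (linear tighter than SDC) with the fact that the SOC-based relaxation further relaxes the SDC-based one. The paper's proof simply asserts the latter as ``the SOC-based relaxation is obtained by relaxing the SDC constraint,'' whereas you supply the explicit $2\times 2$ principal-minor justification; this extra detail is fine but not required.
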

\begin{proof}
As shown earlier, the SOC-based relaxation is obtained by relaxing the SDC constraint (in the SDC-based relaxation) into a set of SOC constraints. Since the SDC-based relaxation is already weaker than the linear-based relaxation (Theorem~\ref{thm4_2}), and the SOC-based relaxation further relaxes the SDC-based relaxation model, it follows that the linear-based relaxation is also tighter than the SOC-based relaxation. \qed
\end{proof}

Between the three convex relaxations for FP \eqref{eq8}, the linear relaxation best balances solution quality and computational efficiency. As formally proven, the feasible region of the linear-based relaxation is strictly contained within the feasible region of the SDC-based relaxation, thereby establishing its superior tightness. Furthermore, the linear-based relaxation replaces the original nonconvex quadratic constraint \eqref{eq8_j} with a single linear inequality \eqref{eq29_j}, making it significantly simpler than both the SDC- and SOC-based relaxations. Therefore, the linear-based relaxation is not only the tightest among the three but also the most tractable from an algorithmic perspective. 

Our proposed Two-Step Convex Relaxation and Recovery (TS-CRR) algorithm first solves the linear-based relaxation of the FP \eqref{eq8} to identify the optimal monomials. It then uses these monomials to compute their corresponding coefficients by solving a recovery LP. The steps of the TS-CRR algorithm are outlined below:
\begin{algorithm}[H]
\caption{Two-Step Convex Relaxation and Recovery (TS-CRR)}
\begin{algorithmic}[1]
\State \textbf{Input:} Data $\{x^{(k)}, y^{(k)}\}_{k=1}^N$,
\State \textit{Step 1: Optimal monomial}: Solve the linear-based convex relaxation of FP \eqref{eq8} to obtain $\left(\hat{c},\hat{s},\hat{b},\hat{\gamma},\hat{v}\right)$,
\State Compute $\left(s^*,b^*,\gamma^*\right) = \mathcal{T}^{-1}(\hat{s},\hat{b},\hat{\gamma},\hat{v})$,
\State \textit{Step 2: Coefficient recovery}: Solve LP \eqref{eq42} to recover the coefficients,
\begin{subequations} \label{eq42}
\begin{align}
\min_{c,\gamma} \quad & \gamma \\
\text{s.t.} \quad & \left| y^{(k)} - \sum_{\alpha \in \Gamma_d} s^*_\alpha c_\alpha (x^{(k)})^\alpha \right| \le \gamma, \quad \forall k.
\end{align}
\end{subequations} 
\State \textbf{Output:} Sparse polynomial regression coefficients $c^*$.
\end{algorithmic}
\end{algorithm}

The TS-CRR algorithm, in step 1, finds optimal monomials and then in step 2 optimally determine the corresponding coefficients. 

\begin{theorem} \label{thm:lp_exact}
Let \( (\hat{c}^*, \hat{s}^*, \hat{b}^*, \hat{\gamma}^*, \hat{v}^*) \) be an optimal solution to the linear-based relaxation of the FP \eqref{eq8}. If
\begin{equation} \label{eq:lp_exact_condition}
\|\hat{s}^*\|^2 + \|\hat{b}^*\|^2 + (\hat{v}^*)^2 = \sum_{\alpha \in \Gamma_d} \hat{s}_\alpha^* + \sum_{k=1}^N \hat{b}_k^* + \hat{v}^*,
\end{equation}
then the linear-based relaxation solves FP \eqref{eq8} exactly. That is, \( (\hat{c}^*, \hat{s}^*, \hat{b}^*, \hat{\gamma}^*, \hat{v}^*) \) is also optimal for the original FP \eqref{eq8}.
\end{theorem}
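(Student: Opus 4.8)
The plan is to show that condition \eqref{eq:lp_exact_condition} forces the optimal point of the linear-based relaxation to already satisfy the only constraint that was dropped when passing from FP \eqref{eq8} to its linear-based relaxation, namely the nonconvex constraint \eqref{eq8_j}. Since all other constraints of FP \eqref{eq8} are retained verbatim in the linear-based relaxation (only \eqref{eq8_j} is replaced by the weaker linear inequality \eqref{eq29_j}), once we verify \eqref{eq8_j} at $(\hat{c}^*,\hat{s}^*,\hat{b}^*,\hat{\gamma}^*,\hat{v}^*)$ we may conclude that this point is feasible for FP \eqref{eq8}. Combined with the fact that the linear-based relaxation is a relaxation (Theorem \ref{thm3}), so its optimal value is a lower bound on that of FP \eqref{eq8}, feasibility of the optimal relaxed point for the original problem immediately upgrades it to an optimal point of FP \eqref{eq8}, with matching objective value.

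First I would record that because the linear-based relaxation contains constraints \eqref{eq26_b}--\eqref{eq26_i} (equivalently \eqref{eq8_b}--\eqref{eq8_i}) unchanged, the optimal point $(\hat{c}^*,\hat{s}^*,\hat{b}^*,\hat{\gamma}^*,\hat{v}^*)$ satisfies all of them; in particular it satisfies the linear constraint \eqref{eq29_j}, i.e. $\sum_{\alpha \in \Gamma_d}\hat{s}^*_\alpha + \sum_{k=1}^N \hat{b}^*_k + \hat{v}^* \ge 1$. Next I would invoke the hypothesis \eqref{eq:lp_exact_condition}, which equates the left side of \eqref{eq8_j}, namely $\|\hat{s}^*\|^2 + \|\hat{b}^*\|^2 + (\hat{v}^*)^2$, with exactly the linear sum appearing in \eqref{eq29_j}. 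Chaining these two facts gives
\begin{equation*}
\|\hat{s}^*\|^2 + \|\hat{b}^*\|^2 + (\hat{v}^*)^2 = \sum_{\alpha \in \Gamma_d}\hat{s}^*_\alpha + \sum_{k=1}^N \hat{b}^*_k + \hat{v}^* \ge 1,
\end{equation*}
which is precisely constraint \eqref{eq8_j}. Hence $(\hat{c}^*,\hat{s}^*,\hat{b}^*,\hat{\gamma}^*,\hat{v}^*)$ is feasible for FP \eqref{eq8}.

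Finally I would close the optimality argument. Since every feasible point of FP \eqref{eq8} is feasible for the linear-based relaxation (Theorem \ref{thm3}) and the two problems share the same objective $\hat{\gamma}/\hat{v}$, the optimal value of the relaxation is at most the optimal value of FP \eqref{eq8}. But we have just exhibited a point, $(\hat{c}^*,\hat{s}^*,\hat{b}^*,\hat{\gamma}^*,\hat{v}^*)$, that is feasible for FP \eqref{eq8} and attains the relaxation's optimal value; therefore the optimal value of FP \eqref{eq8} is at most that of the relaxation as well. The two optimal values coincide, and $(\hat{c}^*,\hat{s}^*,\hat{b}^*,\hat{\gamma}^*,\hat{v}^*)$ is optimal for FP \eqref{eq8}.

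I do not expect a serious obstacle here: the proof is essentially a feasibility-transfer argument plus the standard "relaxation optimum equals original optimum when a relaxed optimal point is original-feasible" sandwich. The one point that deserves care is making explicit that \eqref{eq:lp_exact_condition} is exactly the statement that the dropped quadratic term equals the substituted linear term, so that no residual gap in \eqref{eq8_j} can occur; this is where the hypothesis is used in full, and I would state it as the key observation. A secondary subtlety worth a remark is that $\hat{v}^*$ is fixed and positive by \eqref{eq8_h}, so the objective $\hat{\gamma}/\hat{v}$ is well defined and the objective comparison between the two problems is legitimate.
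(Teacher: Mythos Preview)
Your proof is correct and follows the same overall strategy as the paper: verify that the relaxed optimum satisfies the dropped constraint \eqref{eq8_j}, then invoke the standard relaxation sandwich (via Theorem~\ref{thm3}) to conclude optimality. The difference lies in how feasibility of \eqref{eq8_j} is established. The paper argues that, since each coordinate lies in $[0,1]$, the termwise inequality $t^2\le t$ combined with the hypothesis \eqref{eq:lp_exact_condition} forces $\hat{s}^*_\alpha,\hat{b}^*_k\in\{0,1\}$ (and $\hat{v}^*$ binary), and from this binarity deduces \eqref{eq8_j}. You bypass that detour entirely: you simply chain the hypothesis with the already-satisfied linear constraint \eqref{eq29_j} to get $\|\hat{s}^*\|^2+\|\hat{b}^*\|^2+(\hat{v}^*)^2=\sum\hat{s}^*_\alpha+\sum\hat{b}^*_k+\hat{v}^*\ge 1$. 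Your route is shorter and uses strictly less structure; the paper's route yields the additional (but unneeded for the theorem as stated) information that the selection variables are binary at the optimum.
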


\begin{proof}
Constraint \eqref{eq8_j} is the only nonconvex constraint, which is replaced by the linear inequality, in the linear-based relaxation:
\begin{equation} \label{eq:linrelax}
\sum_{\alpha \in \Gamma_d} \hat{s}_\alpha + \sum_{k=1}^N \hat{b}_k + \hat{v} \ge 1.
\end{equation}

As already established in Theorem~\ref{thm4_2}, the inequality
\begin{equation} \label{eq49:ineq}
\|\hat{s}\|^2 + \|\hat{b}\|^2 + \hat{v}^2 \le \sum_{\alpha \in \Gamma_d} \hat{s}_\alpha + \sum_{k=1}^N \hat{b}_k + \hat{v},
\end{equation}
holds for all \( \hat{s}, \hat{b}, \hat{v} \in [0,1] \), which directly implies that every feasible point of the FP \eqref{eq8} satisfies the linear constraint \eqref{eq:linrelax}. Therefore, the linear-based relaxation defines a larger feasible region than the feasible region of the FP \eqref{eq8}. 

Now, suppose that the linear-based relaxation returns an optimal solution \( (\hat{c}^*, \hat{s}^*, \hat{b}^*, \hat{\gamma}^*, \hat{v}^*) \) satisfying strict equality in \eqref{eq49:ineq}, i.e.,
\begin{equation} \label{eq:equality}
\|\hat{s}^*\|^2 + \|\hat{b}^*\|^2 + (\hat{v}^*)^2 = \sum_{\alpha \in \Gamma_d} \hat{s}_\alpha^* + \sum_{k=1}^N \hat{b}_k^* + \hat{v}^*.
\end{equation}

This implies that
\begin{equation}
(\hat{s}_\alpha^*)^2 = \hat{s}_\alpha^*, \quad (\hat{b}_k^*)^2 = \hat{b}_k^*, \quad (\hat{v}^*)^2 = \hat{v}^*,
\end{equation}
for all \( \alpha\) and \( k \), which can only happen if 
\begin{equation}
\hat{s}_\alpha^* \in \{0,1\}, \quad \hat{b}_k^* \in \{0,1\}, \quad \hat{v}^* = 1.
\end{equation}

Therefore, the point \( (\hat{c}^*, \hat{s}^*, \hat{b}^*, \hat{\gamma}^*, \hat{v}^*) \) also satisfies the nonconvex constraint \eqref{eq8_j} of the original FP \eqref{eq8}. Since it is already optimal for the linear-based relaxation and feasible for FP \eqref{eq8}, it must also be optimal for FP \eqref{eq8}.
\qed
\end{proof}

\vspace{6cm}

\section{Numerical Experiments} \label{sec:app}
In this section, the proposed TS-CRR algorithm for sparse polynomial regression with anomalous data filtering is applied and tested on two different data sets. The first dataset consists of electricity prices and the other contains the temperature measurements. 

\subsection{Application to electricity prices}
\noindent To evaluate the performance of the TS-CRR algorithm, we apply it to a dataset of electricity prices. This application is motivated by the high volatility and also the non-stationary nature of electricity prices \cite{bunn2004modelling}. 

The twelve price areas of the Nordic electricity market for the year 2013 (due to its public availability) are considered for our numerical evaluation\footnote{Twelve areas are NO1, NO2, NO3, NO4, NO5, SE1, SE2, SE3, SE4, FI, DK1, and DK2. In the model, the price in each area depends on the other areas’ prices, as well as wind generation and installed capacities across all areas.}. The results of TS-CRR algorithm are compared with several existing models including regression models and artificial intelligence models. The linear and polynomial regression models from category one are selected. Binary decision tree and ensemble regression model are selected from the third category.

Interpolation and extrapolation errors of the selected benchmark models and the TS-CRR algorithm are used as performance indicators. To compare the interpolation performances, we divide the given data based on odd and even hours and exploit odd-hour data to train the models and even-hour data to test the interpolation errors. The extrapolation performances of the models are evaluated using the first 97\% of each month data for training and the remaining 3\% for testing. In fact, approximately the last day of each month is reserved for checking the extrapolation performance of each model.

\subsubsection{Results of the TS-CRR algorithm}
First, we present results of TS-CRR algorithm. The results consist of the regression performance in all months of 2013 for 12 areas.  
\begin{table}[h!]
	\centering    
\caption{Parameters of TS-CRR algorithm in electricity-price application.}
	\label{tbl1}
	\begin{tabular}{|c|l|c|}
		\hline
		\textbf{Parameter} & \textbf{Description} & \textbf{Value} \\\hline
		$d$     & Degree of the sparse polynomial     & 2    \\\hline
		$N$     & Number of training data points      & 360  \\\hline
		$m_d$   & Total number of candidate monomials & 1485 \\\hline
		$N - l_b$ & Number of anomalous data points     & 4    \\\hline
		$l_m$   & Number of selected monomials  & 120 \\\hline
	\end{tabular}
\end{table}
 
 The parameters of the TS-CRR algorithm are given in Table \ref{tbl1}. Figs. \ref{Fig1} and Fig. \ref{Fig2} show the R squared values of the TS-CRR algorithm for all price areas and months. These values are separated based on the interpolation and extrapolation parts. 

\noindent
\begin{minipage}[t]{0.48\linewidth}
    \centering    \includegraphics[scale=0.325]{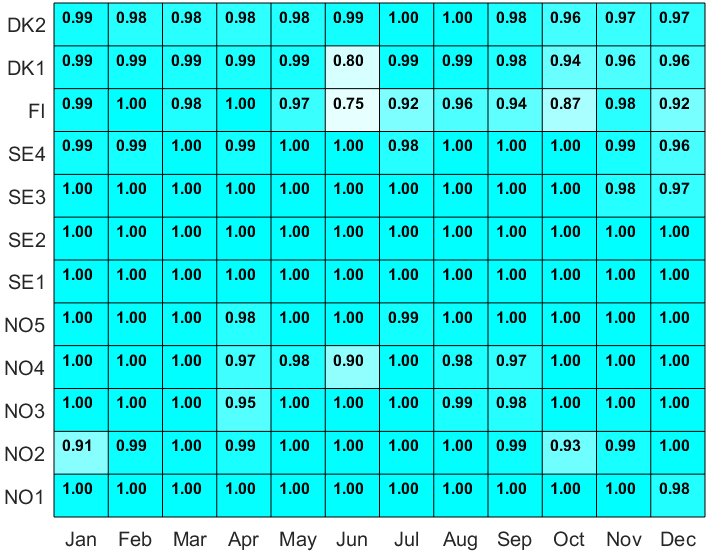}
    \captionof{figure}{R squared values of interpolation performance of the TS-CRR algorithm}
    \label{Fig1}
\end{minipage}
\hfill
\begin{minipage}[t]{0.48\linewidth}
    \centering    \includegraphics[scale=0.325]{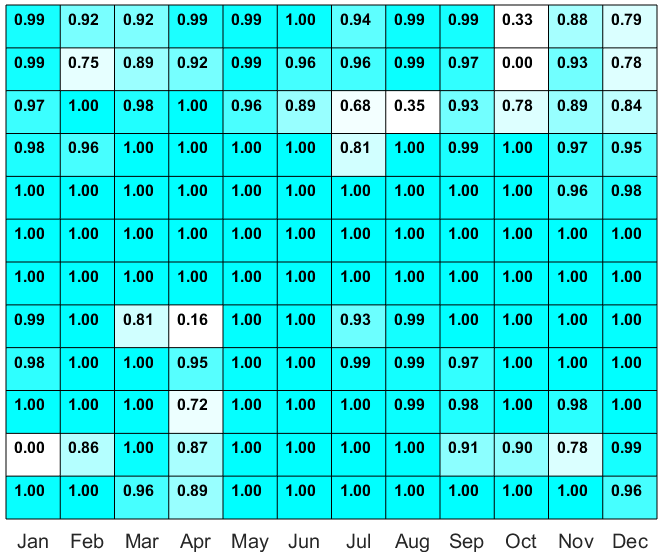}
    \captionof{figure}{R squared values of extrapolation performance of the TS-CRR algorithm}
    \label{Fig2}
\end{minipage}

Based on Figs. \ref{Fig1} and \ref{Fig2}, the proposed TS-CRR algorithm can accurately predict the absent parts of the data. Both interpolation and extrapolation performances of the TS-CRR algorithm are generally acceptable in all the cases studied, with only few exceptions such as the interpolation performances of DK1, FI, and NO4 in June 2013. 

To show the performance of the proposed TS-CRR algorithm, the actual and estimated prices for NO1 in January and March 2013 are shown in Figs. \ref{Fig3} and \ref{Fig4}.

\noindent
\begin{minipage}{\linewidth}
\centering
\begin{minipage}{0.47\linewidth}
\begin{figure} [H]
\centering
     \includegraphics[scale=0.3]{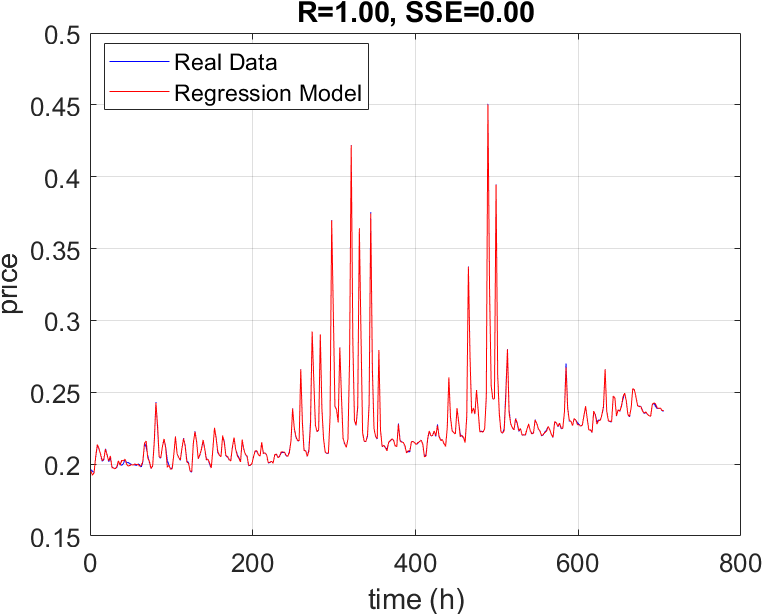}
    \caption{The actual and estimated interpolation prices of area NO1 obtained by TS-CRR in March 2013}
     \label{Fig3}
\end{figure}
\end{minipage}
\hspace{0.01\linewidth}
\begin{minipage}{0.47\linewidth}
\begin{figure} [H]
\centering
     \includegraphics[scale=0.3]{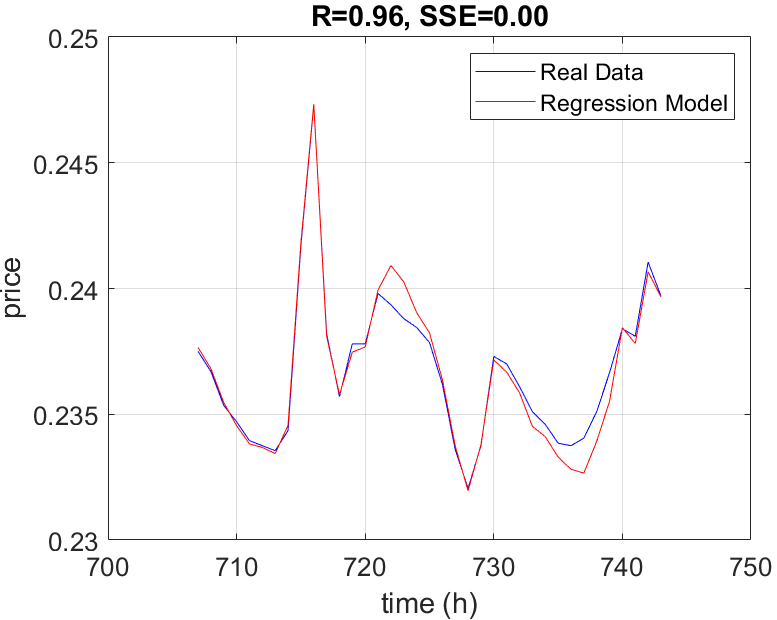}
    \caption{The actual and estimated extrapolation prices of area NO1 obtained by TS-CRR in March 2013}
     \label{Fig4}
\end{figure}
\end{minipage}
\end{minipage}

\subsubsection{Comparison with regression models}
In this subsection, we compare the results of the TS-CRR algorithm with those obtained from linear and polynomial regression models \cite{yao2014new}. The R-squared values for the linear and polynomial regressions are reported in Fig. \ref{Fig5} to Fig. \ref{Fig8}.

\noindent
\begin{minipage}[h]{0.48\linewidth}
    \centering
    \includegraphics[scale=0.325]{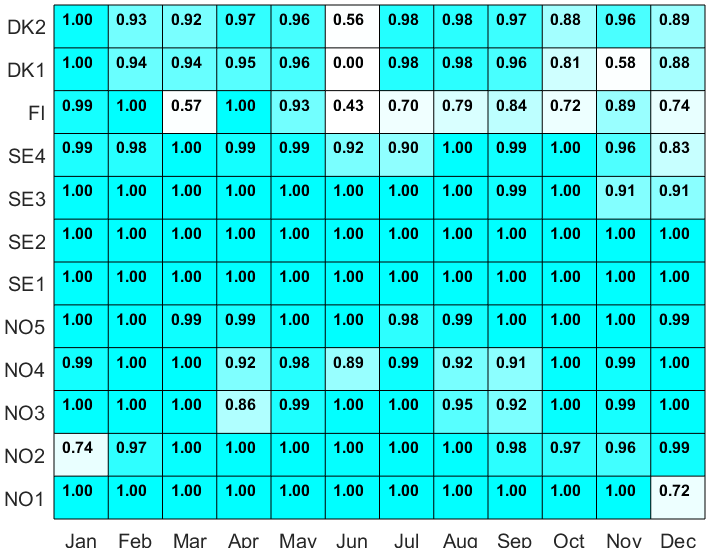}
    \captionof{figure}{R squared values of interpolation performance of the linear regression model}
    \label{Fig5}
\end{minipage}
\hfill
\begin{minipage}[h]{0.48\linewidth}
    \centering    \includegraphics[scale=0.325]{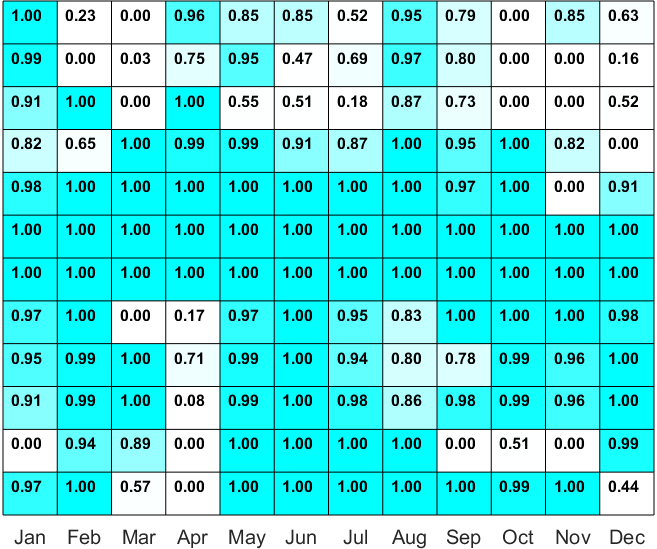}
    \captionof{figure}{R squared values of extrapolation performance of the linear regression model}
    \label{Fig6}
\end{minipage}
\noindent
\begin{minipage}{\linewidth}
\centering
\begin{minipage}{0.48\linewidth}
\begin{figure} [H]
\centering     \includegraphics[scale=0.325]{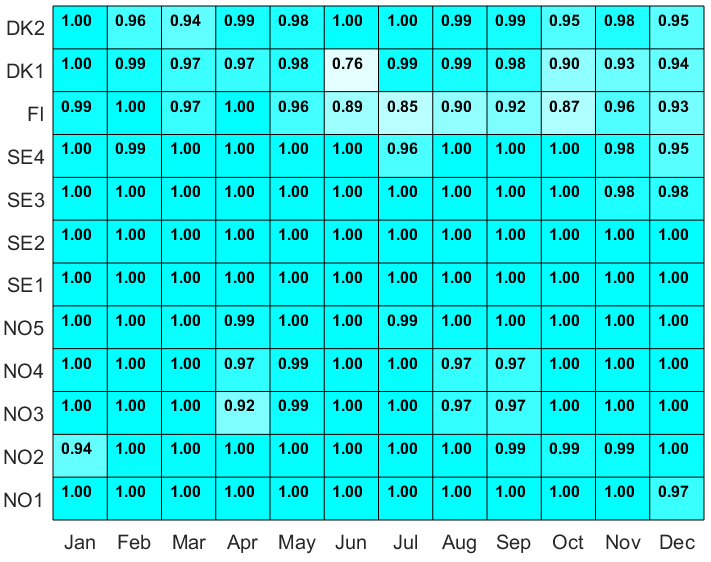}
    \caption{R squared values of interpolation performance of the polynomial regression model}
     \label{Fig7}
\end{figure}
\end{minipage}
\hspace{0.02\linewidth}
\begin{minipage}{0.48\linewidth}
\begin{figure} [H]
\vspace{-1mm}
\centering     \includegraphics[scale=0.325]{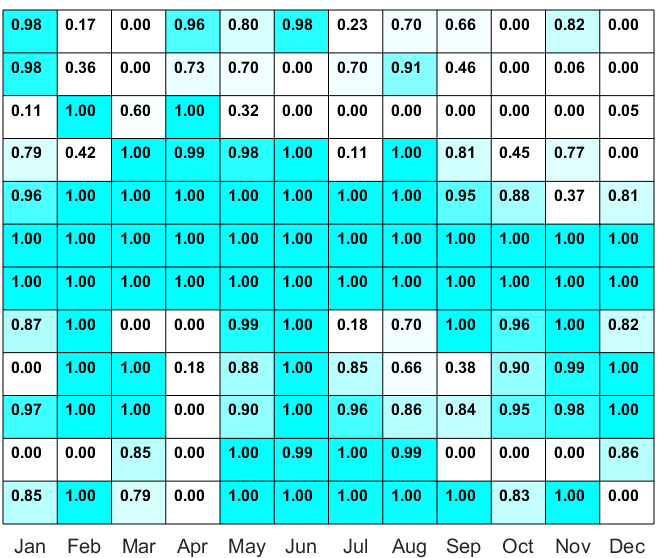}
    \caption{R squared values of extrapolation performance of the polynomial regression model}
     \label{Fig8}
\end{figure}
\end{minipage}
\end{minipage}

Tables \ref{tbl2} and \ref{tbl3} present the minimum, mean, and maximum values of the MSE (Mean Squared Errors) and SSE (Sum of Squared Errors) for the proposed TS-CRR algorithm, as well as for the linear, and polynomial regression models over all areas and months. 

\begin{table}[htbp] 
	\centering
	\caption{minimum, mean, and maximum values of MSE of the proposed TS-CRR algorithm, linear, and polynomial models over all areas and months divided by interpolation and extrapolation parts. 
	\label{tbl2}}
		\begin{tabular}{|c|c|c|c|}\hline
			model&Linear&Polynomial&\textbf{Proposed}\\\hline
			Min of interpolation MSE&0.0000&0.0000&0.0000 \\\hline
			Mean of interpolation MSE&0.0002&0.0001&0.0001 \\\hline
			Max of interpolation MSE&0.0106&0.0016&0.0015 \\\hline
   	  Min of extrapolation MSE&0.0000&0.0000&0.0000 \\\hline
			Mean of extrapolation MSE&0.0002&0.0005&0.0003 \\\hline
			Max of extrapolation MSE&0.0089&0.0138&0.0069 \\\hline
		\end{tabular}	
\end{table}
\begin{table}[htbp] \label{tbl3}
	\centering
	\caption{minimum, mean, and maximum values of SSE of the proposed TS-CRR algorithm, linear, and polynomial models over all areas and months divided by interpolation and extrapolation parts. 
	\label{tbl3}}
		\begin{tabular}{|c|c|c|c|}\hline
			model&Linear&Polynomial&\textbf{Proposed}\\\hline
			Min of interpolation SSE&0.0000&0.0000&0.0000 \\\hline
			Mean of interpolation SSE&0.0675&0.0183&0.0011 \\\hline
			Max of interpolation SSE&3.6394&0.5498&0.0258 \\\hline
   	  Min of extrapolation SSE&0.0000&0.0000&0.0000 \\\hline
			Mean of extrapolation SSE&0.0069&0.0168&0.0174 \\\hline
			Max of extrapolation SSE&0.3189&0.4969&0.5286 \\\hline
		\end{tabular}	
\end{table}

Regarding Table~\ref{tbl3}, it is important to note that the number of test data points used to evaluate interpolation performance is significantly larger than those used for extrapolation performance, as evident from Fig.~\ref{Fig3} and Fig.~\ref{Fig4}. Since SSE depends on the number of test data points, the larger test set used for interpolation results in higher SSE values compared to extrapolation.

Fig.~\ref{Fig5} and Fig.~\ref{Fig6} show that the linear regression model performs poorly in both interpolation and extrapolation tasks due to the limited number of monomials included in its structure. 

In contrast, as illustrated by Fig.~\ref{Fig7} and Table~\ref{tbl2}, the polynomial regression model demonstrates superior interpolation performance compared to linear regression model and comparable performance compared to the TS-CRR algorithm. However, Fig.~\ref{Fig8} and Table~\ref{tbl3} reveal that the polynomial regression model performs poorly in extrapolation (Increasing the number of monomials improves interpolation but tends to deteriorate extrapolation performance due to model overfitting).

Overall, the proposed TS-CRR algorithm shows satisfactory performance in both interpolation and extrapolation tasks when compared to conventional static regression models. The method not only achieves lower prediction errors in most tested scenarios, but also demonstrates greater robustness when applied to datasets with varying levels of noise and sparsity. These results suggest that the algorithm can effectively capture the underlying spatiotemporal relationships in the data, making it a promising alternative for applications where traditional regression approaches may struggle to generalize beyond the training domain.
\subsubsection{Comparison with intelligence models}
In this subsection, the results of the TS-CRR algorithm are compared with two intelligent models. First, the results of the binary decision tree and ensemble regression models are given in Fig. \ref{Fig9} to Fig. \ref{Fig12} \cite{myles2004introduction}.  

\noindent
\begin{minipage}{\linewidth} 
\centering
\begin{minipage}{0.48\linewidth}
\begin{figure} [H]
\centering     \includegraphics[scale=0.325]{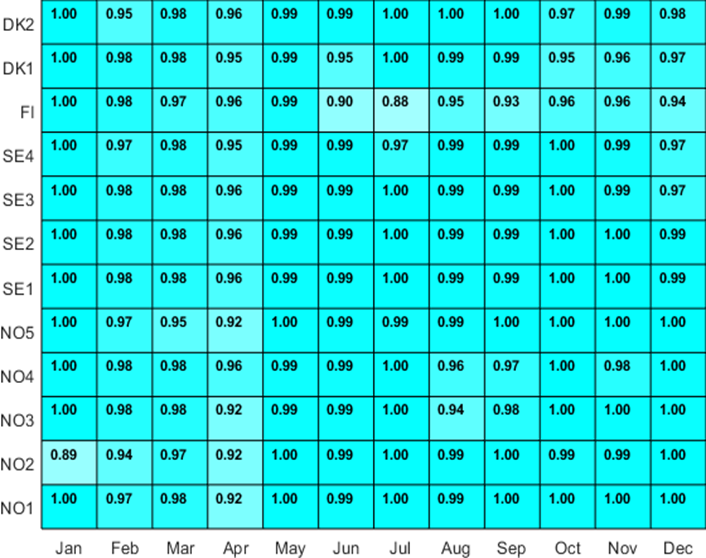}
    \caption{R squared values of interpolation of  binary decision tree model}
     \label{Fig9}
\end{figure}
\end{minipage}
\hspace{0.02\linewidth}
\begin{minipage}{0.48\linewidth}
\begin{figure} [H]
\centering     \includegraphics[scale=0.335]{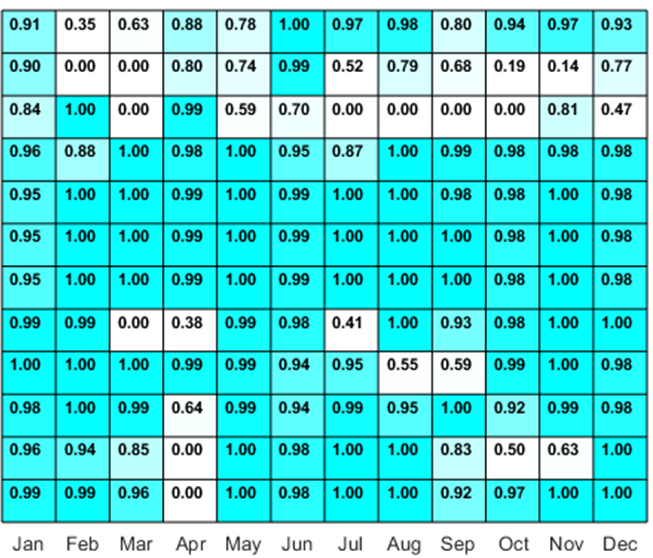}
    \caption{R squared values of performance of  binary decision tree model}
     \label{Fig10}
\end{figure} 
\end{minipage}
\end{minipage}
\noindent
\begin{minipage}{\linewidth}
\centering
\begin{minipage}{0.48\linewidth}
\begin{figure} [H]
\centering     \includegraphics[scale=0.325]{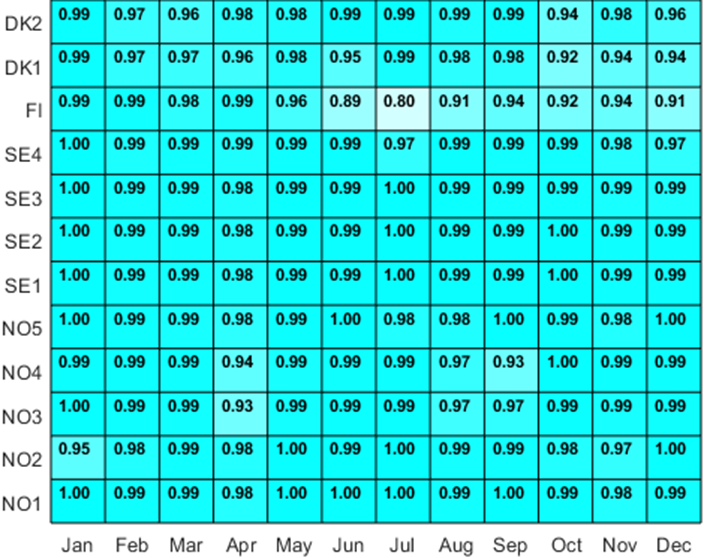}
    \caption{R squared values of interpolation of  ensemble regression model}
     \label{Fig11}
\end{figure}
\end{minipage}
\hspace{0.01\linewidth}
\begin{minipage}{0.48\linewidth}
\begin{figure} [H]
\centering     \includegraphics[scale=0.33]{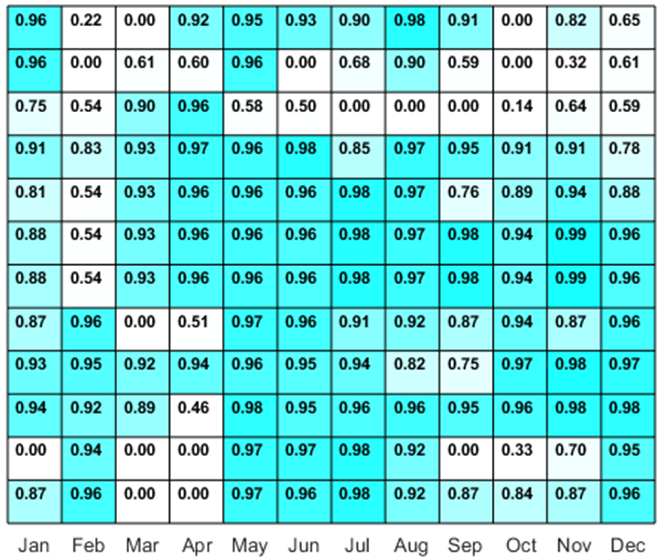}
    \caption{R squared values of extrapolation of  ensemble regression model}
     \label{Fig12}
\end{figure}
\end{minipage}
\end{minipage}

If we compare the R-squared values of the binary decision tree and ensemble regression models with those from the TS-CRR algorithm, we can clearly see the better performance of the latter in the examined cases.

For a better comparison, Fig. \ref{Fig13} shows the MSEs of the intelligent models and the TS-CRR algorithm using candle plots, where color intensity depicts the histogram of the MSE values. We have 144 interpolation MSEs and 144 extrapolation MSEs for each model, and their distributions are shown through candles in this figure.   
\begin{figure} [H]
\centering
     \includegraphics[scale=0.54]{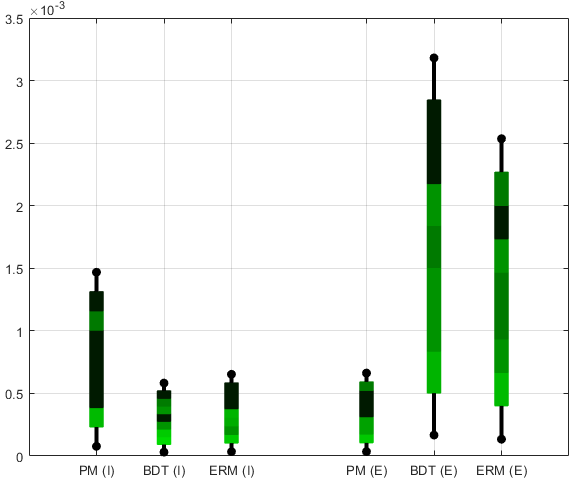}
    \caption{The histograms of interpolation and extrapolation MSEs for the TS-CRR algorithm (PM), Binary decision tree (BDT), and Ensemble Regression Model (ERM). Letters I and E are used to refer to interpolation and extrapolation MSEs, respectively.}
     \label{Fig13}
\end{figure}
Based on Fig.~\ref{Fig13}, the following items are needed to be discussed.
\begin{itemize}
\item Although the interpolation performance of the TS-CRR algorithm is good, the intelligent models exhibit even better interpolation capability. This is because intelligent models possess more complex architectures and richer parameterization, which enable them to better fit the training data, thereby achieving lower interpolation errors in most cases.
\item The extrapolation performance of the TS-CRR algorithm, however, is significantly better than that of the examined intelligent models. This is mainly due to the lower structural complexity of the TS-CRR algorithm, which reduces the risk of overfitting and makes it inherently more robust and suitable for extrapolation tasks, as observed here.
\item Overall, when considering both interpolation and extrapolation mean squared errors (MSEs) together, the TS-CRR delivers more balanced and superior performance compared to the intelligent models, making it a strong candidate for applications requiring accurate interpolation and extrapolation.
\end{itemize}

\label{sec:app1}
\subsection{Application to temperature measurements}
\noindent In order to evaluate the proposed TS-CRR algorithm on a different dataset, we employ temperature measurements at multiple international weather stations. This task is motivated by the dynamic and nonlinear dependencies between temperature measurements and key atmospheric variables, namely: pressure, relative humidity, and wind speed.

We focus on 12 weather stations listed in Table \ref{station_list}, located in different geographic regions and climates. Each station provides hourly measurements for 12 months (one full year). The goal is to forecast the temperature using the remaining three variables as input. The forecasting performance of the TS-CRR algorithm is compared against three benchmark models from two categories.
\begin{table}[h!]
\centering
\caption{List of weather stations.
\label{station_list}}
\begin{tabular}{|c|c|c|}
\hline
WMO ID & City & Country \\ \hline
3163 & Tiksi & Russia \\
8181 & Santo Domingo & Dominican Republic \\
12001 & Helgoland & Germany \\
12772 & Yerevan & Armenia \\
12805 & Tbilisi & Georgia \\
12812 & Kutaisi & Georgia \\
12836 & Gyumri & Armenia \\
12840 & Van & Turkey \\
12846 & Batumi & Georgia \\
12870 & Trabzon & Turkey \\
12882 & Kars & Turkey \\
12892 & Iğdır & Turkey \\ \hline
\end{tabular}
\end{table}
Interpolation and extrapolation errors are employed as performance indicators. For interpolation, odd-hour samples are used for training, and even-hour samples for testing. For extrapolation, the first 97\% of each month's data are used for training, and the final 3\% (roughly one day) for testing.
\subsection{Results of the Proposed TS-CRR algorithm}
\noindent The TS-CRR algorithm is applied to each of the 12 stations over the 12-month period. The model parameters are summarized in Table \ref{tab_temp_params}. R-squared values across all stations and months are shown in Figs. \ref{TempFig1} and \ref{TempFig2}.
\begin{table}[h!]
	\centering     
    \caption{Parameters of TS-CRR algorithm for temperature forecasting.
    \label{tab_temp_params}}
	\begin{tabular}{|c|l|c|}
		\hline
		\textbf{Parameter} & \textbf{Description} & \textbf{Value} \\\hline
		$d$     & Degree of the sparse polynomial     & 4    \\\hline
		$N$     & Number of training data points      & 360  \\\hline
		$m_d$   & Total number of candidate monomials & 35 \\\hline
		$N - l_b$ & Number of anomalous data points     & 6    \\\hline
		$l_m$   & Number of selected monomials  & 32 \\\hline
	\end{tabular}
\end{table}

\noindent
\begin{minipage}[h!]{0.48\linewidth}
    \centering
    \includegraphics[scale=0.33]{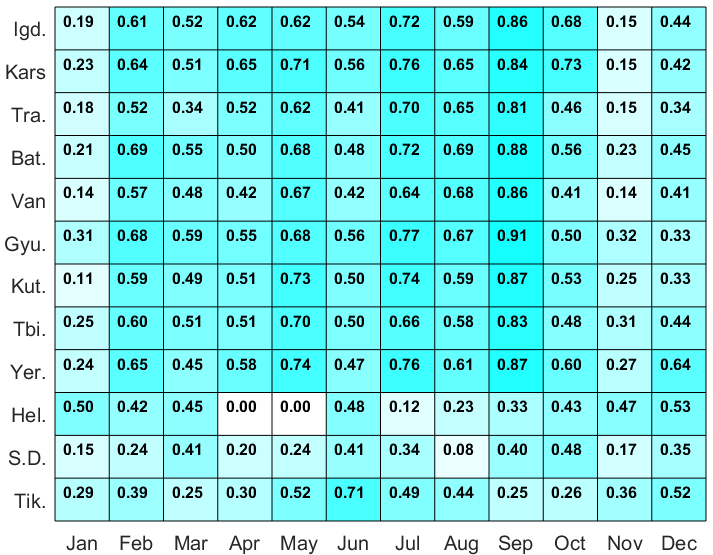}
    \captionof{figure}{R-squared values of interpolation performance of the TS-CRR}
    \label{TempFig1}
\end{minipage}
\hfill
\begin{minipage}[h!]{0.48\linewidth}
    \centering
    \includegraphics[scale=0.33]{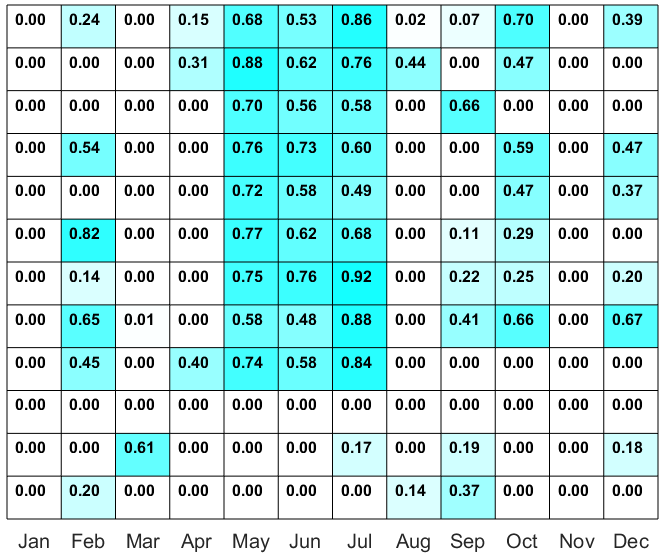}
    \captionof{figure}{R-squared values of extrapolation performance of the TS-CRR }
    \label{TempFig2}
\end{minipage}
\noindent
\begin{minipage}{\linewidth}
\centering
\begin{minipage}{0.48\linewidth}
\begin{figure} [H] 
\centering     \includegraphics[scale=0.325]{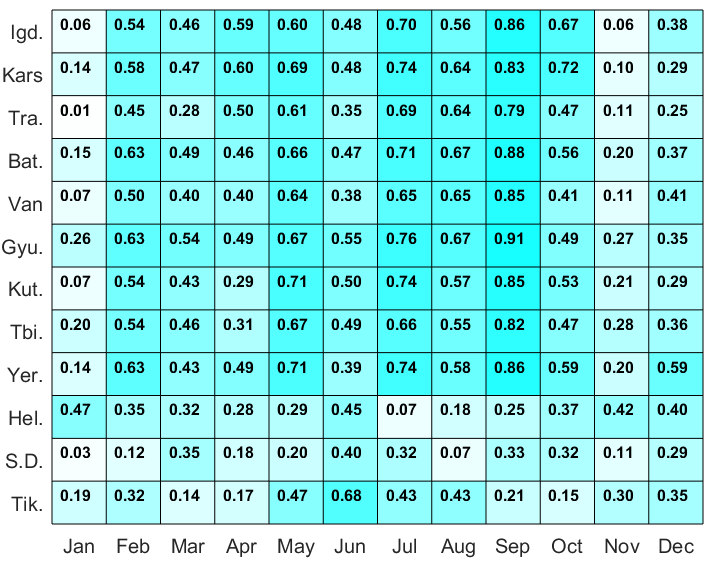}
    \caption{R-squared values of interpolation of the linear regression}
     \label{TempFig3}
\end{figure}
\end{minipage}
\hspace{0.01\linewidth}
\begin{minipage}{0.48\linewidth}
\vspace{0.1mm}
\begin{figure} [H] 
\centering     \includegraphics[scale=0.325]{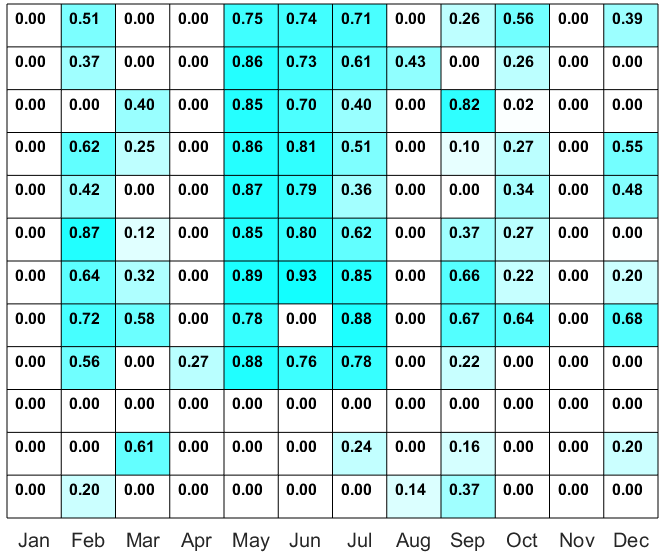}
    \caption{R-squared values of extrapolation of the linear regression}
     \label{TempFig4}
\end{figure}
\end{minipage}
\end{minipage}
\noindent
\begin{minipage}{\linewidth}
\centering
\begin{minipage}{0.48\linewidth}
\begin{figure} [H] 
\centering     \includegraphics[scale=0.325]{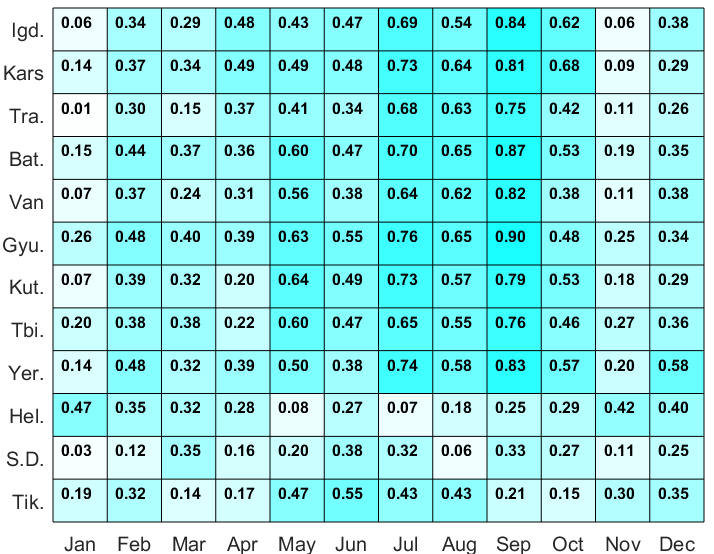}
    \caption{R-squared values of interpolation of the polynomial regression}
     \label{TempFig5}
\end{figure}
\end{minipage}
\hspace{0.01\linewidth}
\begin{minipage}{0.48\linewidth}
\begin{figure} [H] 
\centering     \includegraphics[scale=0.325]{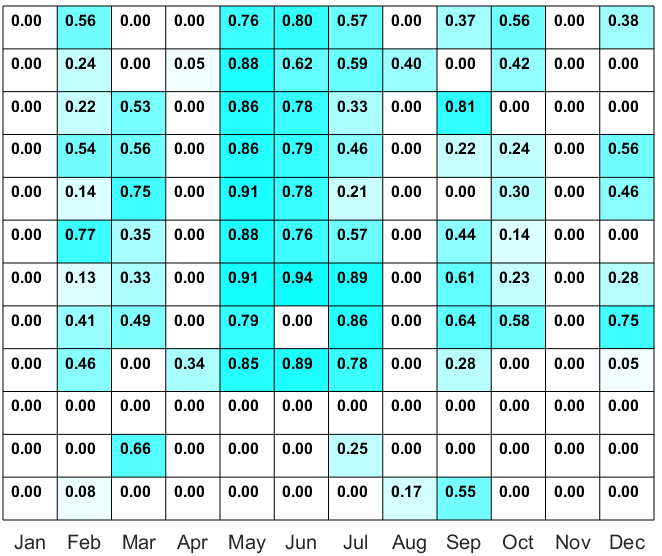}
    \caption{R-squared values of extrapolation of the polynomial regression}
     \label{TempFig6}
\end{figure}
\end{minipage}
\end{minipage}

\subsubsection{Comparison with static regression and intelligent models}
The results of linear and polynomial regression models are shown by Fig. \ref{TempFig3} to Fig. \ref{TempFig6}. Then, Fig. \ref{TempFig7} shows the histogram candles for MSE distributions of interpolation and extrapolation in the proposed, linear, and polynomial regressions.

\begin{figure} [h!]
\centering
     \includegraphics[scale=0.35]{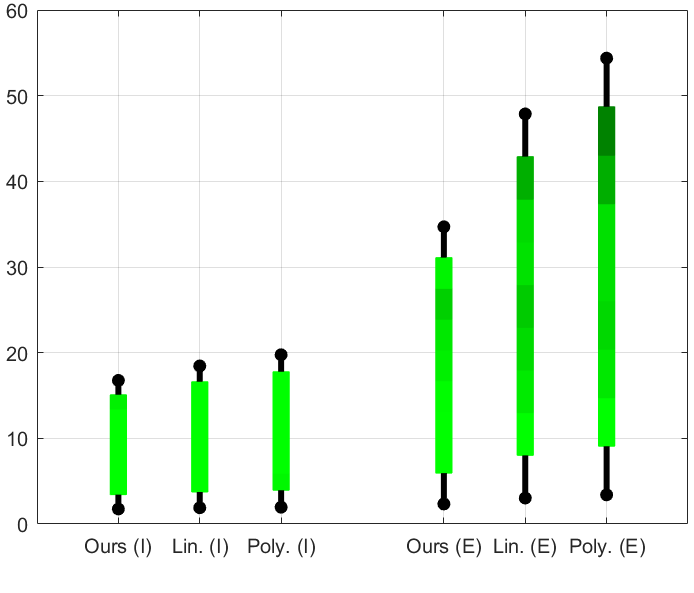}
    \caption{The histograms of interpolation and extrapolation MSEs in the proposed (Ours), linear (Lin), and polynomial (Poly) regression models. Letters I and E are used to refer to interpolation and extrapolation MSEs, respectively.}
     \label{TempFig7}
\end{figure}
\vspace{-8mm}
\noindent
\begin{minipage}{\linewidth}
\centering
\begin{minipage}{0.48\linewidth}
\begin{figure} [H] 
\centering
 \includegraphics[scale=0.325]{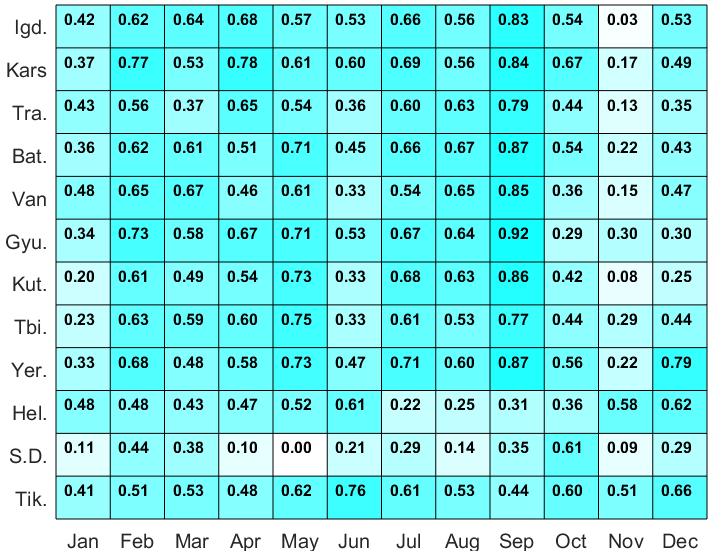}
    \caption{R-squared values of interpolation of the binary decision tree regression}
     \label{TempFig8}
\end{figure}
\end{minipage}
\hspace{0.02\linewidth}
\begin{minipage}{0.48\linewidth}
\vspace{0.2mm}
\begin{figure} [H] 
\centering    \includegraphics[scale=0.325]{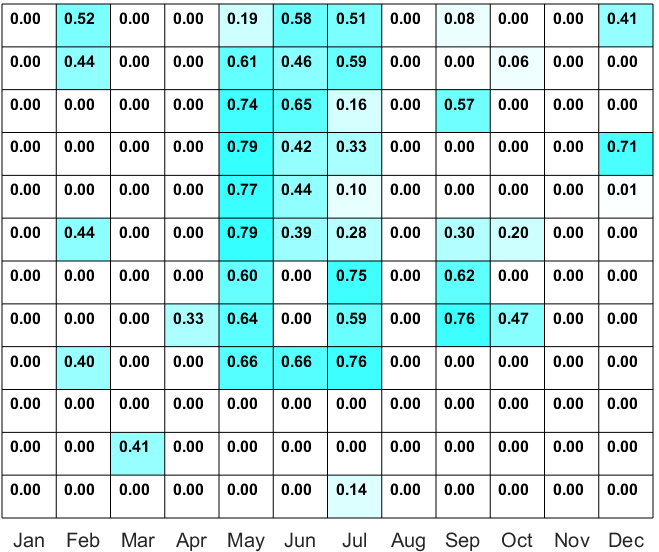}
    \caption{R-squared values of extrapolation of the binary decision tree regression}
     \label{TempFig9}
\end{figure}
\end{minipage}
\end{minipage}
Finally, the TS-CRR algorithm is also benchmarked against the binary decision tree. The R-squared values for the binary decision tree are shown in Fig. \ref{TempFig8} and Fig. \ref{TempFig9}.

Fig.~\ref{TempFig8} and Fig.~\ref{TempFig9} show the interpolation and extrapolation performances of the binary decision tree regression model across the 12 weather stations. The results indicate that while the binary decision tree achieves moderate interpolation accuracy, its extrapolation performance is significantly weaker. This behavior highlights the model’s tendency to overfit the training data, capturing local variations but failing to generalize well to unseen samples. In contrast, the proposed TS-CRR algorithm demonstrates more stable performance across both interpolation and extrapolation.
\subsection{Discussion}
The proposed TS-CRR algorithm demonstrates promising performance in both electricity price and temperature measurement applications, outperforming several benchmark regression and intelligent models in various metrics.
It avoids overfitting through sparsity enforcement while preserving enough complexity to capture essential nonlinearities. This results in strong extrapolation performance without substantial compromise in interpolation accuracy. Its simplicity, interpretability, and robustness make it suitable for deployment in real-world forecasting systems where both short-term accuracy and long-term generalization are critical.

\section{Conclusions} \label{sec:cnc}
This paper proposes a new SPRM with anomalous data filtering. First, the SPRM is formulated as a nonconvex QCQP. Then through a proposed fractional mapping, the nonconvex QCQP is reformulated as a FP with only one nonconvex spherical constraint. We theoretically show that the reformulated FP has better computational performance than the original nonconvex QCQP. The proposed FP is then solved by a developed linear-based relaxation model. A TS-CRR algorithm is proposed for SPRM with anomalous data filtering. The TS-CRR algorithm, in step 1, finds optimal monomials and then in step 2 optimally estimates the corresponding coefficients. We have tested the proposed TS-CRR algorithm using two datasets: one contains electricity prices and the other consists of temperature measurements. The results of the TS-CRR algorithm are compared with those obtained from four different different benchmark models. The numerical experiments supported by theoretical developments show a promising performance of the proposed TS-CRR algorithm for real-life applications. 

\section{Data availability statement}
All datasets used in the current paper are available through the following link: 

\url{https://github.com/Roozbeh-Abolpour/Paper-Datasets}.


\end{document}